\DeclareMathOperator\supp{supp}
\DeclareMathOperator\law{Law}
\newcommand{\dd}{\text{d}}
\newcommand{\av}[1]{\left<#1\right>}
\renewcommand{\P}{\mathbb P}
\newcommand{\R}{\mathbb{R}}
\newcommand{\E}{\mathbb{E}}
\newcommand{\N}{\mathbb{N}}
\newcommand{\F}{\mathcal F}
\newcommand{\C}{\mathcal C}
\newcommand{\cS}{\mathcal{S}}
\newcommand{\cM}{\mathcal{M}}
\newcommand{\cB}{\mathcal{B}}
\newcommand{\cL}{\mathcal{L}}
\newcommand{\cH}{\mathcal{H}}
\newcommand{\Schw}{\mathcal{S}}
\newcommand{\Itos}{It\^o's\ }
\newcommand{\I}{\mathbb{I}}
\newtheorem{theorem}{Theorem}[section]
\newtheorem{corollary}[theorem]{Corollary}
\newtheorem{lemma}[theorem]{Lemma}
\newtheorem{prop}[theorem]{Proposition}
\theoremstyle{definition}
\newtheorem{definition}[theorem]{Definition}
\newtheorem{example}[theorem]{Example} 
\numberwithin{equation}{section}
\title{Dean--Kawasaki equation with initial condition in the space of positive distributions}
\author{Vitalii Konarovskyi\footnote{Faculty of Mathematics, Informatics and Natural Sciences, University of Hamburg, 20146 Hamburg, Germany. E-mail: \href{mailto:vitalii.konarovskyi@uni-hamburg.de}{vitalii.konarovskyi@uni-hamburg.de}} \footnote{Institute of Mathematics of NAS of Ukraine, 01024 Kyiv, Ukraine} \and Fenna Müller\footnote{Fakult\"{a}t f\"{u}r Mathematik und Informatik, Universit\"{a}t Leipzig, 04109 Leipzig, Germany. E-mail: \href{mailto:fenna.mueller@math.uni-leipzig.de}{fenna.mueller@math.uni-leipzig.de}} \footnote{Max Planck Institute for Mathematics in the Sciences, 04103 Leipzig, Germany}}
\begin{document}
\maketitle

\begin{abstract}
    We show that the Dean--Kawasaki equation does not admit non-trivial solutions in the space of tempered measures. 
    More specifically, we consider martingale solutions taking values, and with initial conditions, in the subspace of measures admitting infinite mass and satisfying some integrability conditions.  
    Following work by the first author, Lehmann and von Renesse~\cite{konarovskyi2019dean}, we show that the equation only admits solutions if the initial measure is a discrete measure. Our result extends the previously mentioned works by allowing measures with infinite mass.
\end{abstract}

\section{Introduction}
The Dean--Kawasaki equation was proposed by James Dean \cite{dean1996langevin} and Kyozi Kawasaki \cite{kawasaki1973} to describe the density $\rho$ of fluctuating particles at inverse temperature $\beta$.
\begin{align*}
        \partial_t \rho &=  \beta^{-1} \Delta \rho 
        + \nabla \cdot\left( \rho \nabla \frac{\delta F}{\delta \rho}[\rho] \right)
        + \nabla \cdot \left( \sqrt{\rho} \eta \right).
\end{align*}
In this nonlinear, singular stochastic partial differential equation, $F$ describes the interaction of the particles within the fluid and $\eta$ is a vector of space-time white noise.
Phrasing the Dean--Kawasaki equation as a martingale problem in the space of finite measures, in \cite{konarovskyi2020dean,konarovskyi2019dean} it was shown that its solutions exist only for initial condition of the form $\frac{1}{\alpha} \sum_{k= 1}^n \delta_{x_i}$ for some $n \in \N$. Furthermore, these solutions are then given by an empirical measure of particles $X_t^i$ following a system of coupled stochastic differential equations with initial conditions $X_0^i = x_i$ \cite{konarovskyi2020dean}.
It is then natural to ask whether this rigidity still holds when allowing solutions and initial conditions with infinite mass. 
In this article we obtain an analogous result in the space of positive tempered distributions. 
Specifically, we show that even allowing initial conditions with infinite mass, e.g. the Lebesgue measure, the equation only admits solutions if the initial condition is a sum of Dirac masses.
We work on the space of tempered measures $\cM_\Schw$ on $\R^n$, i.e. the space of measures which integrate all Schwartz functions $\Schw$.
Here, we are considering solutions to a simplified version without interaction
\begin{align}
        \partial_t \rho &=  \frac{\alpha}{2} \Delta \rho
        + \nabla \cdot \left( \sqrt{\rho} \eta \right)
        \label{eqn:DK}
\end{align}
with $\alpha > 0$.
In this setting the definition of a solution and the main statement read as follows.
\begin{definition}\label{def:MPloc}
	A continuous $\cM_\Schw$-valued process $(\mu_t)_{t \ge 0}$ is a solution to the Dean--Kawasaki equation \eqref{eqn:DK} with initial condition $\nu \in \cM_{\Schw}$ if $\mu_0 = \nu$ and
$$
	M_t(\varphi) =  \av{\mu_t, \varphi} - \av{\mu_0, \varphi} - \int_0^t \frac{\alpha}{2}\av{\mu_s, \Delta \varphi }  \dd s,\quad t\ge 0,
$$
is a martingale with respect to the natural filtration $(\F)_{t\ge0}$ generated by $(\mu_t)_{t\ge 0}$ with quadratic variation
$$
	[M_\cdot(\varphi)]_t =  \int_0^t \av{\mu_s, |\nabla \varphi|^2} \dd s,\quad t\ge 0, 
$$
for each $\varphi \in \Schw$. 
\end{definition}

\begin{theorem}\label{theo:main}
	Let $\nu \in \cM_{\Schw}$. Then the Dean--Kawasaki equation \eqref{eqn:DK} has a unique in law solution $(\mu_t)_{t \ge 0}$ started from $\nu$ if and only if $\nu(\R^d)\alpha \in \N \cup \{\infty\}$ and there exists at most countable family $x_i$, $i \in I \subseteq \N$, such that $\nu = \frac{1}{\alpha} \sum_{i \in I} \delta_{x_i}$.	Moreover, in this case, 
    \begin{equation}\label{eqn:form_of_mu_t}
      \mu_t = \frac{1}{\alpha} \sum_{i \in I} \delta_{B_{\alpha t}^{i}},\quad t\ge 0,
    \end{equation}
    where $(B_{t}^i)_{t\ge 0}$, $i\in I$, is a family of independent $d$-dimensional Brownian motions with $B^i_0 = x_i$ a.s.
\end{theorem}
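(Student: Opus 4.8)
The plan is to follow the strategy of \cite{konarovskyi2019dean,konarovskyi2020dean}, adapting it to the infinite-mass setting. The "if" direction is the easy one: given $\nu = \frac{1}{\alpha}\sum_{i\in I}\delta_{x_i}$, I would directly verify that $\mu_t = \frac{1}{\alpha}\sum_{i\in I}\delta_{B^i_{\alpha t}}$, with independent Brownian motions $B^i$ started at $x_i$, defines an $\cM_\Schw$-valued continuous process solving the martingale problem. Applying \Itos formula to $\varphi(B^i_{\alpha t})$ for $\varphi\in\Schw$ and summing over $i$ gives $M_t(\varphi) = \frac{1}{\alpha}\sum_i \int_0^t \alpha \nabla\varphi(B^i_{\alpha s})\cdot\dd B^i_s$, whose quadratic variation is $\sum_i\int_0^t |\nabla\varphi(B^i_{\alpha s})|^2\,\dd s = \int_0^t\av{\mu_s,|\nabla\varphi|^2}\,\dd s$. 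The only genuine point is checking that the sum over the (possibly infinite) family $I$ converges in $\cM_\Schw$ uniformly on compact time intervals and that $M_t(\varphi)$ is a true martingale; this follows from the Schwartz decay of $\varphi$ together with the integrability assumption implicit in $\nu\in\cM_\Schw$ (controlling how many $x_i$ lie in each dyadic annulus), and a localization/Doob argument. Uniqueness in law in this case reduces to the fact that a solution whose quadratic variation is as above forces each atom to move as an independent Brownian motion.

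For the "only if" direction, the core is to show that any solution $(\mu_t)$ must be purely atomic with atoms of size exactly $1/\alpha$. The key structural tool is the quadratic variation identity: testing against $\varphi$ and against products/compositions, one derives that for smooth $\varphi,\psi$,
\begin{align*}
  \big[M_\cdot(\varphi),M_\cdot(\psi)\big]_t = \int_0^t \av{\mu_s,\nabla\varphi\cdot\nabla\psi}\,\dd s.
\end{align*}
The crucial step, exactly as in \cite{konarovskyi2019dean}, is to compute the quadratic variation of $\av{\mu_t,\varphi^2}$ in two ways — once from the martingale problem applied directly to $\varphi^2\in\Schw$, and once via \Itos formula treating $\av{\mu_t,\varphi^2}$ as a function of the semimartingales $\av{\mu_t,\varphi\,e_k}$ for a suitable frame — to obtain the self-consistency relation $\av{\mu_s,|\nabla\varphi|^2}\cdot(\text{something}) = \alpha\av{\mu_s,|\nabla\varphi|^2}$, forcing $\mu_s$ to be a sum of Dirac masses with weight $1/\alpha$. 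Concretely, one shows $\int\!\!\int \varphi'(x)^2\,\mu_s(\dd x)$-type quantities satisfy an inequality that becomes equality only on atomic measures; the "superposition principle" / energy identity pins down the mass of each atom.

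Once $\mu_s$ is known to be atomic for (a.e., hence by continuity every) $s$, I would track the atoms: write $\mu_s = \frac{1}{\alpha}\sum_{i} \delta_{x_i(s)}$ and show, using the martingale problem localized near a single atom (choosing $\varphi$ supported in a small ball isolating one atom on a short random time interval), that each $x_i(\cdot)$ is a continuous semimartingale with $\dd x_i = \dd(\text{martingale})$ and quadratic variation $\alpha\,\dd s\,\mathrm{Id}$, i.e. $x_i(s) = B^i_{\alpha s}$ for a Brownian motion $B^i$; the cross-variations $[x_i,x_j] = 0$ for $i\ne j$ give independence. A subtlety specific to the infinite-mass case is that atoms could in principle enter from or escape to infinity, or accumulate; ruling this out requires the $\cM_\Schw$-continuity of $(\mu_t)$ together with the conservation of "local mass" that the martingale problem enforces, so that the index set $I$ and the labels are stable in time. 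Finally, evaluating at $s=0$ identifies $I$ and $x_i = x_i(0)$ with the atoms of $\nu$, and gives $\nu(\R^d)\alpha = |I| \in \N\cup\{\infty\}$.

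The main obstacle I anticipate is the localization argument in the infinite-mass setting: in the finite-mass case one can test against global functions and use finiteness of total mass freely, whereas here one must work with Schwartz test functions and carefully control the contribution of the infinitely many far-away atoms when isolating a single atom, and one must preclude pathological behavior (infinitely many atoms accumulating at a point, or mass leaking in from spatial infinity in finite time) using only the weak continuity in $\cM_\Schw$ and the quadratic-variation bound. I expect this to be where the bulk of the technical work — and any genuinely new ideas beyond \cite{konarovskyi2019dean} — will be concentrated.
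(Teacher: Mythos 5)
The central step of your ``only if'' direction --- the claim that computing the quadratic variation of $\av{\mu_t,\varphi^2}$ ``in two ways'' yields a self-consistency relation forcing $\mu_s$ to be atomic with atoms of weight $1/\alpha$ --- is a genuine gap, and it is also not what \cite{konarovskyi2019dean} does. As written, the second computation is not available: $\av{\mu_t,\varphi^2}$ is a \emph{linear} functional of $\mu_t$ and is not a function of finitely many semimartingales $\av{\mu_t,\varphi e_k}$, so there is no \Itos formula that produces a product of the form $\av{\mu_s,|\nabla\varphi|^2}\cdot(\text{something})$ to compare with $\alpha\av{\mu_s,|\nabla\varphi|^2}$; applying the martingale problem directly to $\varphi^2$ just gives the (consistent) quadratic variation $\int_0^t\av{\mu_s,4\varphi^2|\nabla\varphi|^2}\,\dd s$ and no contradiction. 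The mechanism actually used in \cite{konarovskyi2019dean}, and in the paper, is entirely different: one first proves the Laplace duality $\E e^{-\av{\mu_t,\varphi}}=e^{-\av{\nu,V_t\varphi}}$ with $V_t\varphi=-\alpha\ln(P_te^{-\varphi/\alpha})$ the Cole--Hopf solution of the Hamilton--Jacobi equation (this requires extending the martingale problem to time-dependent test functions and controlling $V_t\varphi$ in $\Schw$), and then shows that $g(s)=\E s^{\alpha\mu_t(A)}$ extends to a smooth function near $s=0$, which forces $\alpha\mu_t(A)\in\N_0$ for every bounded rectangle $A$; the conclusion for $\nu=\mu_0$ is then obtained by letting $t\downarrow0$ via a Portmanteau-type lemma for tempered measures. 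The same duality also delivers uniqueness in law of the one-dimensional (hence, by the Markov/martingale-problem argument of Ethier--Kurtz, all finite-dimensional) distributions; your proposed route to uniqueness via atom tracking is not developed.

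Your remaining steps inherit this gap: the atom-tracking and localization programme presupposes atomicity with weight exactly $1/\alpha$, which is precisely what has not been established, and you yourself identify the infinite-mass localization (accumulating atoms, mass entering from infinity) as an unresolved obstacle. The ``if'' direction of your plan does match the paper's Lemma~\ref{lem:existence_of_solution} in spirit (Cauchy-in-probability approximation by finite empirical measures, with the tail control coming from $\nu\in\cM_\Schw$ and Lemma~\ref{lemma:Schw1}), but the heart of the theorem --- rigidity and uniqueness --- requires the duality argument, or at least some concrete replacement for the self-consistency relation you only sketch.
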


Note that the choice of the state space $\mathcal{M}_\Schw$ for a solution to the Dean--Kawasaki equation~\eqref{eqn:DK} is motivated by the fact that the process $(\mu_t)_{t\ge 0}$, defined by the equation~\eqref{eqn:form_of_mu_t}, can blow up in finite time if $\mu_0$ is only a locally finite measure (see Example~\ref{exa:blow_up} below). Therefore, this leads to the problem of the existence of solutions. The restriction to the space $\mathcal{M}_\Schw$ provides additional assumptions on the tails of the initial distribution $\mu_0$ that prevent the blowup.

The dichotomy of existence and non-existence does not prevail for the Dean--Kawasaki equation with noise which is white in time but coloured in space. Approximating the square root, solutions to the martingale problem which are absolutely continuous with respect to the Lebesgue measure are shown to exist in e.g. \cite{mauri2010large,djurdevac2022weak}.
Mild solutions to the Dean--Kawasaki equation of kinetic Langevin systems with coloured noise are treated in \cite{cornalba2019regularized,cornalba2020from}.
The well-posedness for a large class of conservative stochastic partial equations with coloured Stratonovich noise is shown in \cite{fehrman2021wellposed}.
Similarly, these equations are treated from the perspective of random dynamical systems in \cite{fehrman2022ergodicity}. The existence and uniqueness of superposition solutions to the Dean--Kawasaki equation with correlated noise were obtained in~\cite{gess2022conservative}.
Although, the original Dean--Kawasaki equation with a smooth drift potential $F$ has only trivial solutions, it can admit complex solutions in the case of singular drift (see, e.g. \cite{von2009entropic,Andres:2010} for $\alpha>0$ and~\cite{Marx:2018,Schiavo:2018,Konarovskyi:AP:2017,Konarovskyi:CFWDPA:2022,Konarovskyi:CPAM:2019,Konarovskyi:CD:2020} for $\alpha=0$).   

Note that the original idea of the physicists' was to describe the density of the fluctuating particles instead of the particles individually. In particular, this makes sense in the case on an intermediary scale of particles, where describing them individually is hard, but fluctuations are visible \cite{kawasaki1973}, as done in fluctuating hydrodynamics \cite{giacomin1999deterministic,spohn1991large}.
The articles by Konarovskyi et al show, however, that this aim is not achieved by the Dean--Kawasaki equation since its solutions are only empirical measures of the individual particles \cite{konarovskyi2019dean,konarovskyi2020dean}.
Nevertheless, the structure-preserving discretisations of the Dean-Kawasaki equation are the right tool to simulate the density fluctuations of interacting diffusing particles, as shown in the works \cite{cornalba2021dean,cornalba2023density}.
Also, the evolution of a system of independent Brownian motions can be appropriately approximated by solutions to regularized Dean--Kawasaki equation \cite{djurdevac2022weak}. 

{\it Content of the paper.} After setting up the necessary notation and spaces in Section \ref{sec:preliminaries}, we shows that the empirical process for an infinite family of independent Brownian motions solves the equation according to Definition~\ref{def:MPloc} in Section \ref{sec:existence}. 
Section \ref{sec:uniqueness} is devoted to the uniqueness in law of solutions to the Dean--Kawasaki equation. We also prove that other solutions, except those obtained in Section~\ref{sec:existence}, do not exist. 
The proofs of both these facts, rely on the Laplace duality, obtained in Proposition~\ref{prop:laplace_duality}.
Finally, Section~\ref{sec:estimates} contains some auxiliary statement about the properties of solutions to the Hamilton-Jacobi equation.

\section{Preliminaries}
\label{sec:preliminaries}
Let $\C(\R^d)$ denote the space of continuous functions on $\R^d$ and $\C_b(\R^d)$ be its subspace of bounded functions. Let $\C_c(\R^d)$ (resp. $\C_c^\infty(\R^d)$) be the space of continuous (resp. smooth) compactly supported functions on $\R^d$. 
We will denote the space of continuously differentiable functions from an interval $I$ to $\R$ by $\C^1(I)$ and  the space of continuous functions from $I$ to a topological space $E$ by $\C(I, E)$.
For $f,g:\R^d\to\R$ we will write shortly $f\le g$ if $f(x) \leq g(x)$ for all $x \in \R^d$.  Set $\N_0: = \N \cup \{0\}$ and $[n] := \{1,..., n\}$.

The space of Schwartz functions on $\R^d$ will be denoted as usual by $\Schw$, that is, 
$$
\Schw:= \left\{ \varphi \in \C^\infty(\R^d)|\  \sup_{x \in \R^d} |x|^n |D^\beta \varphi(x)| < \infty,\ \forall n\in\N_0,\ \beta \in \N_0^d \right\},
$$
where $D^\beta f(x): = \frac{\partial ^{|\beta|}}{\partial x_1^{\beta_1}\ldots \partial x_d^{\beta_d}}f(x)$ and $|\beta|:=\beta_1+\ldots+\beta_d$ for $\beta=(\beta_1,\ldots,\beta_d)$.
We will equip $\cS$ with the locally convex topology induced by the seminorms 
\begin{equation*}
	\|f\|_{\beta, n}: = \sup_{x \in \R^d} |x|^n \left|D^\beta f(x)\right| 	 
\end{equation*}
for all $n \in \N_0$ and $\beta \in \N_0^d$. We also set $\Schw_+:=\{\varphi \in \Schw|\ \varphi \geq 0\}$. The dual space to $\cS$ will be denoted by $\cS'$. 
Let $\cM_{\text{loc}}$ be the space of locally finite measures on $\R^d$ equipped with the topology of vague convergence.
Throughout, we consider the set of {\it tempered measures} $\cM_{\Schw}:= \cM_{\text{loc}} \cap \Schw'$, where we identify each measure $\mu$ with the linear functional $\varphi\mapsto\av{\mu,\varphi}=\int_{\R^d}\varphi(x)\mu(\dd x)$. According to \cite[Lemma~2.3 and Proposition~2.5]{baake2022note}, $\mu\in\cM_{\text{loc}}$ belongs to $\cM_\Schw$ if  $\av{\mu, |\varphi|} < \infty$ for all $\varphi \in \Schw$. It will be convenient to equip the space $\cM_\Schw$ with the weak-$*$ topology of Schwartz functions, i.e. a sequence of tempered measures $\{\mu_n\}_{n \in \N}$ converges in $\cM_{\Schw}$ to a tempered measure $\mu$  if and only if $\lim_{n \to \infty} \av{\mu_n, \varphi} = \av{\mu, \varphi}$ for all $\varphi \in \Schw$.
Note that $\cM_\Schw$ can be identified as the set of positive tempered distributions in the following sense: 
$$
\mathcal{M}_\Schw = \{\rho \in \Schw'|~ \varphi \in \Schw_+ \Rightarrow \av{\rho, \varphi} \geq 0\}.
$$
Thus, $\mathcal{M}_\Schw$ is a closed subset of $\Schw'$.  For more information on the relationship between tempered distributions and tempered measures see \cite{baake2022note}.

For $\varphi:\R^d\to\R$, we define the functions $\varphi_+(x) = \max\{\varphi(x), 0\}$ and $\varphi_-(x) = \max\{-\varphi(x), 0\}$, $x\in\R^d$, and note that $\varphi = \varphi_+ - \varphi_-$.
We denote the indicator function of a set $A$ by $\I_{A}$.

Let $P_t$ be the heat semigroup whose generator is given by $\frac{\alpha}{2}\Delta$ and its carr\'e-du-champs operator by $\Gamma = |\nabla \cdot|^2$. Note that $V_t\varphi = - \alpha\ln\left( P_t e^{-\frac{1}{\alpha}\varphi} \right)$, $t\ge 0$, is the (Cole-Hopf) solution to the Hamilton-Jacobi equation
\begin{equation}\label{eqn:hamilto_jacobi_equation}
\begin{split}
	\partial_t v(t,x) &=  \frac{\alpha}{2} \Delta v(t,x) - \frac{1}{2} \Gamma\left( v(t,x) \right),\\
	v(0,x) &=  \varphi(x),
 \end{split}
\end{equation}
for every $\varphi\in\Schw$. Some bounds on the heat semigroup and on the Cole-Hopf solution may be found in Section \ref{sec:estimates}.

\section{Proof of the main result}

\subsection{Existence of solutions given by empirical measures}
\label{sec:existence}

The goal of this section is to show that the process $(\mu_t)_{t\ge 0}$, defined by~\eqref{eqn:form_of_mu_t}, solves the Dean--Kawasaki equation. We will start from an example which shows that the process defined by~\eqref{eqn:form_of_mu_t} can blow up if $\mu_0$ is only a locally-finite measure without any assumptions on the growth of its tails.

\begin{example}\label{exa:blow_up}
Consider a family of $d$-dimensional independent Brownian motions $(B_t^k)_{t\ge 0}$, $k \in \N$, with initial condition $B_0^k = \sqrt{\ln k}e_1$, where $e_1 = (1, 0, \dots, 0)$. Since the sequence $\{\sqrt{\ln k}\}_{k \in \N}$ does not have a cluster point, $\sum_{k \in \N} \delta_{\sqrt{\ln k} e_1} \in \mathcal{M}_{\text{loc}}$. Then, the probability for one of the particles to be in the unit cube is bounded below by 
\begin{align*}
	\P\left\{B_t^k \in [0,1]^d\right\} &=  \P\left\{ \sqrt{t} \xi \in [0,1] \right\}^{d-1} \P\left\{ \sqrt{t}\left( \xi+\ln k \right) \in [0,1] \right\}\\
	&\geq \frac{1}{\sqrt{2\pi t}^d} e^{-\frac{d-1}{2t}} e^{-\frac{\left(\sqrt{\ln k}\right)^2}{2t}}
	= \frac{1}{\sqrt{2\pi t}^d} e^{-\frac{d-1}{2t}} \frac{1}{k^{\frac{1}{2t}}} , 
\end{align*}
where $\xi$ is a standard Gaussian random variable. Hence,
\begin{align*}
	\sum_{k \in \N}\P\left\{ B_t^k \in [0,1]^d  \right\} 
	\geq \frac{1}{\sqrt{2\pi t}^d} e^{-\frac{d-1}{2t}}\sum_{k \in \N}  \frac{1}{k^\frac{1}{2t}} = \infty
\end{align*}
for all $t \geq \frac{1}{2}$. By the Borel-Cantelli lemma, 
\begin{equation*}
\mathbb{P}\left\{ \mu_t([0,1]) = \infty \right\} = \mathbb{P}\left\{ B_t^k \in [0,1] ~\text{for infinitely many}~ k \right\} = 1
\end{equation*} 
for $t \geq \frac{1}{2}$. 
 
\end{example}

\begin{lemma}\label{lem:existence_of_solution}
	Let $\{x_k\}_{k \in \N}$, be a family of points from $\R^d$ such that $\sum_{k \in \N} \delta_{x_k} \in \mathcal M_{\Schw}$ and  $(B^k_t)_{t\ge 0}$, $k \in \N$, be a family of independent $d$-dimensional  Brownian motions satisfying $B_0^k = x_k$ a.s. for all $k \in \N$.
	Then, the process
    \begin{equation}\label{eqn:measure_mu}
      \mu_t = \frac{1}{\alpha} \sum_{k\in\N} \delta_{B^k_{\alpha t}},\quad t\ge 0,
    \end{equation}
    has a continuous version in $\cM_\Schw$ that is a solution to the Dean--Kawasaki equation~\eqref{eqn:DK} with initial value $\mu_0 = \frac{1}{\alpha}\sum_{k \in \N} \delta_{x_k}$.
\end{lemma}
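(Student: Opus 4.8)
The plan is to verify the three requirements of Definition~\ref{def:MPloc} for the process $(\mu_t)_{t\ge 0}$ defined by~\eqref{eqn:measure_mu}: that it has a continuous $\cM_\Schw$-valued version starting from $\mu_0=\frac1\alpha\sum_k\delta_{x_k}$; that $M_t(\varphi)$ is a martingale for each $\varphi\in\Schw$; and that its quadratic variation is $\int_0^t\av{\mu_s,|\nabla\varphi|^2}\,\dd s$. Throughout one works termwise: for a single Brownian motion $B^k$ with $B^k_0=x_k$, \Itos formula gives
\begin{equation*}
	\varphi(B^k_{\alpha t}) = \varphi(x_k) + \int_0^t \frac{\alpha}{2}\Delta\varphi(B^k_{\alpha s})\,\dd s + \sqrt{\alpha}\int_0^t \nabla\varphi(B^k_{\alpha s})\cdot\dd W^k_s,
\end{equation*}
where $W^k$ is the driving Brownian motion of $B^k$, so that $\alpha M^k_t(\varphi):=\varphi(B^k_{\alpha t})-\varphi(x_k)-\int_0^t\frac\alpha2\Delta\varphi(B^k_{\alpha s})\,\dd s$ is a martingale with $[\alpha M^k_\cdot(\varphi)]_t=\alpha\int_0^t|\nabla\varphi(B^k_{\alpha s})|^2\,\dd s$, and by independence the cross-variations between distinct $k$ vanish. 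Summing over $k$, at least formally, $M_t(\varphi)=\sum_k M^k_t(\varphi)$ with $[M_\cdot(\varphi)]_t=\frac1\alpha\sum_k\int_0^t|\nabla\varphi(B^k_{\alpha s})|^2\,\dd s=\int_0^t\av{\mu_s,|\nabla\varphi|^2}\,\dd s$, which is exactly the claim. The whole task is therefore to justify the interchange of the infinite sum with the expectation, the stochastic integral and the time integral, and to establish the continuity of $t\mapsto\mu_t$ in $\cM_\Schw$.

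The key technical input is a tail estimate showing that only finitely many particles are relevant for any fixed Schwartz test function, uniformly over compact time intervals. Concretely I would fix $\varphi\in\Schw$ and $T>0$ and show $\E\sum_k\sup_{t\le T}|\varphi(B^k_{\alpha t})|<\infty$ and similarly for the $\Delta\varphi$ and $|\nabla\varphi|^2$ integrals. To do this, note $|\varphi(x)|\le C_\varphi(1+|x|)^{-m}$ for any $m$, and for $|x_k|$ large the Brownian motion $B^k$ started at $x_k$ stays, with overwhelming probability, far from the origin over $[0,\alpha T]$; a reflection/maximal-inequality bound gives $\E\sup_{t\le T}(1+|B^k_{\alpha t}|)^{-m}\lesssim (1+|x_k|)^{-m}$ up to a correction for the finitely many $k$ with $|x_k|$ small, plus a Gaussian term. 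The hypothesis $\sum_k\delta_{x_k}\in\cM_\Schw$, equivalently $\sum_k(1+|x_k|)^{-m}<\infty$ for $m$ large enough (this is the characterization of tempered measures via \cite{baake2022note} quoted in Section~\ref{sec:preliminaries}), then makes the sum finite. This uniform integrability lets me: (i) conclude $\mu_t\in\cM_\Schw$ a.s. for all $t$ simultaneously (no blow-up, in contrast to Example~\ref{exa:blow_up}); (ii) apply dominated convergence to pass the sum through $\E$; (iii) justify that $\sum_k M^k_\cdot(\varphi)$ converges in $L^2$, locally uniformly in $t$, to a continuous martingale $M_\cdot(\varphi)$, with quadratic variation the $L^1$-convergent sum $\sum_k[M^k_\cdot(\varphi)]$; and (iv) deduce $t\mapsto\av{\mu_t,\varphi}$ is continuous for each $\varphi$, hence (since $\Schw$ is separable and the tails are controlled) $t\mapsto\mu_t$ is continuous in the weak-$*$ topology on $\cM_\Schw$.

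For continuity as an $\cM_\Schw$-valued process I would argue as follows. Each partial sum $\mu^{(N)}_t:=\frac1\alpha\sum_{k\le N}\delta_{B^k_{\alpha t}}$ is manifestly continuous in $\cM_\Schw$ (a finite sum of continuous point masses). The tail bound gives, for any $\varphi\in\Schw$, $\E\sup_{t\le T}|\av{\mu_t-\mu^{(N)}_t,\varphi}|\le\frac1\alpha\E\sum_{k>N}\sup_{t\le T}|\varphi(B^k_{\alpha t})|\to0$ as $N\to\infty$, so $\mu^{(N)}\to\mu$ uniformly on $[0,T]$ in probability for each fixed test function; passing to a subsequence and using a countable dense family of test functions yields, on a full-probability set, a limit that is continuous in the weak-$*$ topology, and this limit is the desired version of $(\mu_t)_{t\ge 0}$. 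Finally one checks $M_t(\varphi)$ is adapted to the natural filtration of $(\mu_t)$ — this is immediate since $M_t(\varphi)$ is a measurable functional of the path $(\mu_s)_{s\le t}$ — and that it is a genuine martingale (not merely a local martingale) because the $L^2$ bounds just established give integrability and the martingale property survives the $L^2$-limit.

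The main obstacle is the tail control in the second paragraph: one must bound, uniformly in $k$ and in a summable way, the supremum over a time interval of a rapidly decaying function evaluated along a Brownian path started far away. The subtlety is that $\sup_{t\le\alpha T}|B^k_t|$ has Gaussian-type fluctuations of order $\sqrt{T}$ around $|x_k|$, so one cannot simply bound $|\varphi(B^k_{\alpha t})|$ by $|\varphi(x_k)|$; instead one splits the event $\{\sup_{t\le\alpha T}|B^k_t-x_k|\le |x_k|/2\}$, where $(1+|B^k_{\alpha t}|)^{-m}\le 2^m(1+|x_k|)^{-m}$, from its complement, whose probability is super-polynomially small in $|x_k|$ by the reflection principle and a Gaussian tail bound, and on which $|\varphi|\le\|\varphi\|_\infty$. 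Summing the two contributions over $k$ and invoking $\sum_k(1+|x_k|)^{-m}<\infty$ closes the argument. This is exactly the role played by the restriction to $\cM_\Schw$: it is the minimal integrability on the configuration $\{x_k\}$ that makes these sums converge and rules out the blow-up exhibited in Example~\ref{exa:blow_up}.
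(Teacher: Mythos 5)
Your proof is correct, but it takes a genuinely different route from the paper in its key technical step. The paper controls only the fixed-time expectations $\E|\varphi(B^k_{\alpha t})| = P_t|\varphi|(x_k)$ and makes them summable via the Fourier-analytic fact that $P_t|\varphi|\in\Schw$ (Lemma~\ref{lemma:Schw1} and Corollary~\ref{corollary}), so that $\sup_{t\le T}\av{\mu_0,P_t|\varphi|}<\infty$; since this gives no uniform-in-time control pathwise, the paper then needs the stopping-time/optional-sampling argument together with \cite[Lemma B.11]{Cherny:2005} to get locally uniform convergence of the martingale parts, a separate $L^2$ computation for the drift terms, and Mitoma's theorem to produce a continuous $\cM_\Schw$-valued version. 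You instead prove the stronger pathwise bound $\E\sum_k\sup_{t\le T}|\varphi(B^k_{\alpha t})|<\infty$ by splitting on the event $\{\sup_{t\le \alpha T}|B^k_t-x_k|\le |x_k|/2\}$, using the reflection principle for the complement and the polynomial characterization $\sum_k(1+|x_k|)^{-m}<\infty$ of tempered point configurations from \cite{baake2022note}. This buys you a.s.\ locally uniform absolute convergence of the partial sums $\mu^{(N)}$ against every Schwartz seminorm-weighted test function, so continuity of $t\mapsto\mu_t$, the identification of the limit martingale and its quadratic variation, and the upgrade from local to true martingale all follow directly, with no need for Mitoma's theorem or the Cherny lemma; the price is a somewhat longer elementary Gaussian estimate in place of the paper's one-line Fourier argument. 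Two cosmetic remarks: the subsequence extraction over a countable dense family is unnecessary, since the random series $\sum_k\sup_{t\le T}(1+|B^k_{\alpha t}|)^{-m}$ has nonnegative terms and finite expectation, hence converges a.s., and on that single event the tail $\sum_{k>N}$ controls $\sup_{t\le T}|\av{\mu_t-\mu^{(N)}_t,\varphi}|$ simultaneously for all $\varphi\in\Schw$ via the seminorms $\|\varphi\|_{0,0}+\|\varphi\|_{0,m}$; and for the martingale property with respect to the natural filtration of $(\mu_t)_{t\ge0}$ you should note explicitly that $M_t(\varphi)$ is a martingale for the larger Brownian filtration and is adapted to the smaller one, whence the tower property closes the argument.
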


\begin{proof}
        For $\varphi \in\Schw$ and $n\in\N$ we define a continuous square-integrable martingale $M_t^n(\varphi)$, $t\ge 0$,  by
        \begin{align*}
                M_t^n(\varphi) : =\av{\mu_t^n,\varphi}-\av{\mu_0^n,\varphi}-\frac{\alpha}{2}\int_0^t\av{\mu^n_s,\Delta\varphi}\dd s,
        \end{align*} 
        where $\mu_t^n = \frac{1}{\alpha} \sum_{k=1}^n \delta_{B^k_{\alpha t}}$.     We are going to show that $M_t^n(\varphi)$, $t\in[0,T]$, is a Cauchy sequence in $\C([0,T],\R)$ in probability for each $T>0$.
        Fix $\varepsilon >0$ and define the stopping time 
        $$
          \tau_{n,m} := \inf\left\{ t \in [0,T]|~ |M_t^n(\varphi) - M_t^{n+m}(\varphi)| > \varepsilon \right\} \wedge T
        $$
        with respect to the filtration $(\F_t)_{t\ge0}$ generated by the family of Brownian motions $(B_{\alpha t}^k)_{t\ge 0}$, $k\in\N$. By the Markov inequality and the optional sampling theorem, we have
        \begin{align*}
                &\P\left\{ \sup_{t \in [0,T]} \left| M_t^{n+m}(\varphi) - M_t^{n}(\varphi)\right|\ge \varepsilon \right\}=  \P\left\{ |M_{\tau_{n,m}}^{n+m}(\varphi) - M_{\tau_{n,m}}^{n}(\varphi)| \geq \varepsilon \right\}\\
                &\qquad\leq \frac{1}{\varepsilon^2} \E |M_{\tau_{n,m}}^n(\varphi) - M_{\tau_{n,m}}^{n+m}(\varphi)|^2 
        = \frac{1}{\varepsilon^2} \E [M^{n+m}(\varphi) - M^n(\varphi)]_{\tau_{n,m}} \\
        &\qquad \le \frac{1}{\varepsilon^2} \E \int_0^T \sum_{k = n+1}^{n+m} \left|\nabla \varphi(B_{\alpha s}^k)\right|^2 \dd s
        = \frac{1}{\varepsilon^2} \int_0^T \sum_{k = n+1}^{n+m} P_s |\nabla \varphi|^2(x_k) \dd s.
        \end{align*}
        Note that $\sup_{t\in [0,T]}\sum_{k =1}^{\infty} P_t|\nabla \varphi|^2(x_k)=\sup_{t\in[0,T]}\av{P_t|\nabla \varphi|^2,\mu_0}<\infty$ due to Lemma~\ref{lemma:Schw1}. Thus, by the dominated convergence theorem, we conclude that
        \[
          \int_0^T \sum_{k = n+1}^{n+m} P_s |\nabla \varphi|^2(x_k) \dd s\to 0
        \]
        as $n\to\infty$. Next, by \cite[Lemma B.11]{Cherny:2005} and the fact that $\C\left( [0,T],\R \right)$ is complete, there exists a continuous local martingale $(M_t(\varphi))_{t \ge 0}$ such that for every $ \varepsilon >0$ and $T\ge 0$
\begin{align}\label{eqn:convergence_of_M_n}
        \lim_{n \to \infty} \P\left\{\sup_{t \in [0,T]} |M_t^n(\varphi) - M_t(\varphi)| > \varepsilon \right\} = 0
\end{align}
and
\begin{align*}
        \lim_{n \to \infty} \P\left\{\sup_{t \in [0,T]} |[M^n(\varphi)]_t - [M(\varphi)]_t| > \varepsilon \right\} = 0.
\end{align*}

To identify the limit, consider the terms individually. Note that for each $\varphi \in \Schw$
\begin{align*}
    \E \av{\mu_t^n, |\varphi|} =\frac{1}{\alpha} \sum_{k \in \N} \E|\varphi(B_{\alpha t}^k)| = \av{\mu_0^n, P_t |\varphi|} \le  \av{\mu_0, P_t |\varphi|}< \infty,
\end{align*}
by Lemma~\ref{lemma:Schw1}.
Thus, $\av{\mu_t^n, \varphi}\to\av{\mu_t,\varphi}$ a.s. as $n\to\infty$, where $\mu_t$ is defined by the equality~\eqref{eqn:measure_mu}. Setting $\psi = \Delta \varphi\in\Schw$, we also conclude that 

\begin{align}
\begin{split}
        \E \sup_{t \in [0,T]} \left(\int_0^t \av{\mu_s, \psi} - \av{\mu_s^n, \psi} \dd s\right)^2&
        \leq  T\E\sup_{t \in [0,T]} \int_0^t (\av{\mu_s, \psi} - \av{\mu_s^n, \psi})^2 \dd s\\
        &\le T\E \int_0^T (\av{\mu_s, \psi} - \av{\mu_s^n, \psi})^2 \dd s\to 0
\end{split}
     \label{eqn:bv}
\end{align}
as $n\to \infty$. Indeed, using Fatou's lemma, the convergence $\av{\mu_s^n,\psi}\to\av{\mu_s,\psi}$ and then the monotone convergence theorem, we get
\begin{align*}
\E \int_0^T (\av{\mu_s, \psi} &- \av{\mu_s^n, \psi})^2 \dd s
\leq\liminf_{n\to\infty} \E \int_0^T (\av{\mu_s^{n+m}, \psi} - \av{\mu_s^n, \psi})^2\dd s\\
&=\frac{1}{\alpha^2}\liminf_{n\to\infty}\E\int_0^T \sum_{k,l = n+1}^{n+m} \psi(B_{\alpha s}^k)\psi(B_{\alpha s}^l) \dd s
 \\
 &\le \frac{1}{\alpha^2}\liminf_{n\to\infty}\int_0^T \sum_{k=n+1}^{n+m} P_s \psi^2(x_k) \dd s+\frac{1}{\alpha^2}\liminf_{n\to\infty}\int_0^T \left(\sum_{k = n+1}^{n+m} |P_s\psi(x_k)|\right)^2 \dd s\\
 &=\frac{1}{\alpha^2}\int_0^T \sum_{k=n+1}^{\infty} P_s \psi^2(x_k) \dd s+\frac{1}{\alpha^2}\int_0^T \left(\sum_{k = n+1}^{\infty} |P_s\psi(x_k)|\right)^2 \dd s.
\end{align*}
Since 
$$
\sup_{s\in [0,T]}\frac{1}{\alpha} \sum_{k=1}^\infty P_s \psi^2(x_k)=\sup_{s\in[0,T]}\av{\mu_0,P_s(\psi^2)}<\infty
$$ 
and 
$$
\sup_{s\in [0,T]}\frac{1}{\alpha} \sum_{k=1}^\infty|P_s\psi(x_k)|=\sup_{s\in [0,T]}\av{\mu_0,|P_s\psi|}<\infty
$$ 
due to Lemma~\ref{lemma:Schw1} and Corollary~\ref{corollary}, the dominated convergence theorem implies the convergence in~\eqref{eqn:bv}.

Hence, for each $t\ge 0$
        \begin{align*}
                M_t(\varphi) = \av{\mu_t, \varphi}  - \av{\mu_0, \varphi} - \frac{\alpha}{2} \int_0^t \av{\mu_s, \Delta \varphi} \dd s\quad\mbox{a.s.}
        \end{align*}
Similarly, one can show that the quadratic variation of $M_\cdot(\varphi)$ is given by
\[
[M(\varphi)]_t = \int_0^t \av{\mu_s,|\nabla\varphi|^2} \dd s,\quad t\ge 0.
\]

To complete the proof of the lemma, we only need to show that the process $(\mu_t)_{t\ge 0}$ has a continuous version in $\cM_\Schw$. According to the observation above, for every $m\in\N$, set of functions $\varphi_i\in\Schw$, $i\in[m]$, and $t_i\ge 0$, $i\in[m]$, the sequence of random vectors $(\langle \mu^n_{t_1},\varphi_1\rangle,\ldots,\langle \mu^n_{t_m},\varphi_m\rangle)$, $n\in\N$, converges almost surely to $(\langle \mu_{t_1},\varphi_1\rangle,\ldots,\langle \mu_{t_m},\varphi_m\rangle)$ in $\R^m$ as $n\to\infty$. Therefore, it converges in distribution in $\R^m$. Thus, to show the existence of a continuous modification of $(\mu_t)_{t\ge 0}$, it is enough to check the tightness of the family $(\langle\mu^n_t,\varphi\rangle)_{t\in[0,T]}$, $n\in\N$, in $\C([0,T],\R)$ for every $\varphi\in\Schw$ and $T>0$, by Theorem 5.3~\cite{Mitoma:1983}.
Using~\eqref{eqn:bv},~\eqref{eqn:convergence_of_M_n} and Markov's inequality, we get for each $\varepsilon>0$
\[
\lim_{n \to \infty} \P\left\{ \sup_{t \in [0,T]}|\av{\mu_s, \varphi} - \av{\mu_s^n, \varphi}| > \varepsilon \right\} = 0.
\]
Hence, $(\av{\mu_t^n, \varphi})_{t \in [0,T]}$ converges to $(\av{\mu_t, \varphi})_{t \in [0,T]}$ in $\C([0,T], \R)$ in probability, and, thus, in distribution. Using Prohorov's theorem, we obtain the required tightness of $(\av{\mu^n_t,\varphi})_{t\in[0,T]}$, $n\in\N$. 
\end{proof}

\subsection{Uniqueness in Law and Ill-posedness}
\label{sec:uniqueness}
The goal of this section is to show that the Dean--Kawasaki equation admits unique $\cM_\Schw$-valued solutions that are defined by~\eqref{eqn:measure_mu}. We will start from auxiliary statements. 

We fix an arbitrary strictly positive function $\kappa\in\Schw$ such that $\kappa(x)=e^{-|x|}$ for all $x\in\R^d$ with $|x|>1$.
\begin{lemma} 
	Let $(\mu_t)_{t \ge 0}$ be a solution to the Dean--Kawasaki equation~\eqref{eqn:DK}. Then $\sup_{s\in[0,t]}\E\av{\mu_s, \kappa}^2 < \infty$ holds for every $t \ge 0$. 
	\label{lemma:technical2}
\end{lemma}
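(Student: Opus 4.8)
The plan is to set $Y_t:=\av{\mu_t,\kappa}$, apply \Itos formula to $Y_t^2$, take expectations, and close a Grönwall estimate, with a localisation to justify the manipulations. By Definition~\ref{def:MPloc}, $Y$ is a continuous semimartingale with $Y_t=Y_0+\tfrac{\alpha}{2}\int_0^t\av{\mu_s,\Delta\kappa}\dd s+M_t(\kappa)$ and $[M_\cdot(\kappa)]_t=\int_0^t\av{\mu_s,|\nabla\kappa|^2}\dd s$, so
\[
  Y_t^2=Y_0^2+\alpha\int_0^t Y_s\av{\mu_s,\Delta\kappa}\dd s+\int_0^t\av{\mu_s,|\nabla\kappa|^2}\dd s+2\int_0^t Y_s\,\dd M_s(\kappa).
\]
The structural fact that makes this work is that $\kappa$ pointwise dominates both $|\nabla\kappa|^2$ and $|\Delta\kappa|$: there is a constant $C=C(d,\kappa)$ with $|\nabla\kappa|^2\le C\kappa$ and $|\Delta\kappa|\le C\kappa$ on $\R^d$. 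Indeed, for $|x|>1$ one computes from $\kappa(x)=e^{-|x|}$ that $|\nabla\kappa(x)|^2=e^{-2|x|}\le\kappa(x)$ and $\Delta\kappa(x)=\big(1-\tfrac{d-1}{|x|}\big)e^{-|x|}$, so $|\Delta\kappa(x)|\le d\,\kappa(x)$; on the compact set $\{|x|\le1\}$ the continuous functions $|\nabla\kappa|^2$ and $|\Delta\kappa|$ are bounded, while $\kappa$ is bounded below by a positive constant, which is precisely why $\kappa$ was chosen strictly positive. Since $\mu_s\ge0$ and $Y_s\ge0$, this yields $\av{\mu_s,|\nabla\kappa|^2}\le CY_s$ and $Y_s\av{\mu_s,\Delta\kappa}\le Y_s\av{\mu_s,|\Delta\kappa|}\le CY_s^2$.

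To make the argument rigorous, set $\tau_N:=\inf\{s\ge0:Y_s\ge N\}$. Since $(\mu_t)$ is continuous in $\cM_\Schw$ and $\kappa\in\Schw$, the path $s\mapsto Y_s$ is continuous, hence a.s.\ bounded on every $[0,T]$, so $\tau_N\uparrow\infty$ a.s.\ and $Y_s\le N$ on $[0,\tau_N]$. Consequently the stopped stochastic integral $\int_0^{\cdot\wedge\tau_N}Y_s\,\dd M_s(\kappa)$ is a genuine $L^2$-martingale, its bracket $\int_0^{\cdot\wedge\tau_N}Y_s^2\av{\mu_s,|\nabla\kappa|^2}\dd s$ being bounded by $CN^3t$; so evaluating the \Itos identity at $t\wedge\tau_N$, taking expectations, and using the two pointwise bounds together with $Y_s\le1+Y_s^2$ gives
\[
  \E Y_{t\wedge\tau_N}^2\;\le\;Y_0^2+Ct+(\alpha C+C)\int_0^t\E Y_{s\wedge\tau_N}^2\,\dd s .
\]
As $t\mapsto\E Y_{t\wedge\tau_N}^2$ is bounded (by $N^2$), Grönwall's inequality yields $\E Y_{t\wedge\tau_N}^2\le(Y_0^2+Ct)e^{(\alpha C+C)t}$, with a bound independent of $N$. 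Letting $N\to\infty$ and applying Fatou's lemma (note $Y_{t\wedge\tau_N}\to Y_t$ a.s.\ since $\tau_N\uparrow\infty$) gives $\E\av{\mu_t,\kappa}^2\le(Y_0^2+Ct)e^{(\alpha C+C)t}<\infty$, because $Y_0=\av{\mu_0,\kappa}<\infty$ as $\mu_0\in\cM_\Schw$. Since the right-hand side is nondecreasing in $t$, this is exactly the asserted bound $\sup_{s\in[0,t]}\E\av{\mu_s,\kappa}^2<\infty$.

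This is a soft argument, and I expect no real obstacle; the only points needing care are the pointwise domination $|\Delta\kappa|\le C\kappa$, $|\nabla\kappa|^2\le C\kappa$ near the sphere $\{|x|=1\}$ — handled by compactness and the strict positivity built into the choice of $\kappa$ — and keeping the Grönwall bound uniform in the localisation parameter $N$ so that Fatou's lemma closes the estimate.
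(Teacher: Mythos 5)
Your proposal is correct and follows essentially the same route as the paper's proof: localisation via the stopping times $\tau_N=\inf\{s:\av{\mu_s,\kappa}\ge N\}$, \Itos formula for $\av{\mu_{t\wedge\tau_N},\kappa}^2$, the pointwise bounds $|\nabla\kappa|^2\le C\kappa$ and $|\Delta\kappa|\le C\kappa$, the inequality $\av{\mu_s,\kappa}\le 1+\av{\mu_s,\kappa}^2$, Gr\"onwall, and finally Fatou's lemma with the continuity of $(\av{\mu_t,\kappa})_{t\ge0}$. The only difference is that you spell out the verification of the domination of $|\nabla\kappa|^2$ and $|\Delta\kappa|$ by $\kappa$, which the paper states without proof.
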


\begin{proof}
We define the family of stopping-times
$$
\tau_n := \inf\{t \geq 0| \av{\mu_t,\kappa} \geq n\},\quad n\in\N.
$$
Then, using \Itos formula, we get
\begin{align*}
    \av{\mu_{t\wedge \tau_n}, \kappa}^2&=\av{\mu_0,\kappa}^2+\alpha\int_0^{t\wedge\tau_n}\av{\mu_{s}, \kappa}\av{\mu_s,\Delta\kappa}\dd s\\ &+\int_0^{t\wedge\tau_n}\av{\mu_s,|\nabla\kappa|^2}\dd s+2\int_0^{t\wedge\tau_n}\av{\mu_s,\kappa}dM_s(\kappa)
\end{align*}
for each $t\ge 0$. Due to the choice of the function $\kappa$, there exists a constant $C>0$ such that 
\[
|\nabla \kappa(x)|^2\le C\kappa(x)\quad \mbox{and}\quad |\Delta\kappa(x)|\le C\kappa(x)
\]
for all $x\in\R^d$. Therefore, using the optimal sampling theorem and the trivial inequality $\av{\mu_{s}, \kappa}\le\av{\mu_{s}, \kappa}^2+1$, we can estimate
\begin{align*}
    \E\av{\mu_{t\wedge \tau_n}, \kappa}^2&\le\av{\mu_0,\kappa}^2+ \alpha C\E\int_0^{t\wedge\tau_n}\av{\mu_{s}, \kappa}^2\dd s+C\E\int_0^{t\wedge\tau_n}\av{\mu_s,\kappa}\dd s\\    &\le\av{\mu_0,\kappa}^2+t+ (1+\alpha)C\int_0^{t}\E\av{\mu_{s\wedge\tau_n}, \kappa}^2\dd s.
\end{align*}
Therefore, 
\[
\E\av{\mu_{t\wedge \tau_n},\kappa}\le \left(\av{\mu_0,\kappa}^2+t\right)e^{(1+\alpha)Ct}
\]
for all $t\ge 0$, by Gr\"onwall's inequality. Using now Fatou's lemma and the continuity of $(\av{\mu_t,\kappa})_{t\ge 0}$, we get
\begin{align*}
	\E\av{\mu_t, \kappa}^2
	\leq \liminf_{n \to \infty}\E\av{\mu_{t\wedge\tau_n}, \kappa}^2 
	\leq \left(\av{\mu_0,\kappa}^2+t\right)e^{(1+\alpha)Ct}.
\end{align*}
for all $t\ge 0$. This completes the proof of the lemma.
\end{proof}

We will next show that the martingale problem in the Definition~\ref{def:MPloc} of the Dean--Kawasaki equation can be extended to time dependent test functions. This will be needed for the proof of the Laplace duality \eqref{eqn:Laplace} below. 

\begin{lemma}
   Let $(\mu_t)_{t \ge 0}$ be a solution to equation~\eqref{eqn:DK} and a function $(\psi_t)_{t\ge 0} \in \C([0,\infty),\Schw)$ satisfying the following assumptions
   \begin{enumerate}[a)]
	\item for each $x\in\R^d$ the function $(\psi_t(x))_{t\ge 0}$ is differentiable and its derivative $(\partial_t\psi_t)_{t\ge 0}$ belongs to $\C([0,\infty),\Schw)$; 
	\item For each $T>0$ there exists a constant $C>0$ such that 
    \[      |\partial_t\psi_t|+|\nabla\psi_t|^2+|\Delta\psi_t|\le C\kappa
    \]
    for all $t\in[0,T]$.
\end{enumerate}
	 Then, the process
	\[
		M_t :=  \av{\mu_t, \psi_t} - \av{\mu_0, \psi_0} - \int_0^t \av{\mu_s, \partial_s \psi_s+\frac{\alpha}{2} \Delta \psi_s } \dd s,\quad t\ge 0,
    \]
	is a continuous square-integrable martingale with quadratic variation
    \begin{equation}\label{eqn:quadratic_variation_of_M}
		[M]_t =  \int_0^t \av{\mu_s, |\nabla\psi_s|^2} \dd s,\quad t\ge 0.
	\end{equation}
	\label{lemma:ito}
\end{lemma}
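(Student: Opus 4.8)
The plan is to prove the lemma by discretising time and freezing the test function on each sub-interval, thereby reducing everything to the martingale problem of Definition~\ref{def:MPloc} with \emph{constant-in-time} test functions, and then passing to the limit. Throughout, the key structural facts I will lean on are: $s\mapsto\mu_s$ is continuous in $\cM_\Schw$; $t\mapsto\psi_t$ is continuous into $\Schw$ and the maps $\varphi\mapsto\Delta\varphi$ and $\varphi\mapsto|\nabla\varphi|^2$ are continuous on $\Schw$; assumption~(b) gives the domination by $C\kappa$; by continuity of the path $\sup_{r\le T}\av{\mu_r,\kappa}<\infty$ a.s.; and Lemma~\ref{lemma:technical2} gives $\sup_{s\le T}\E\av{\mu_s,\kappa}^2<\infty$.

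Fix $T>0$ and, for $n\in\N$, a partition $0=t^n_0<\dots<t^n_{N_n}=T$ with mesh tending to $0$, which I take nested. For $t\in[0,T]$, telescoping along the partition and splitting each increment as
$$\av{\mu_{t^n_{k+1}\wedge t},\psi_{t^n_{k+1}\wedge t}}-\av{\mu_{t^n_k},\psi_{t^n_k}}=\av{\mu_{t^n_{k+1}\wedge t},\psi_{t^n_{k+1}\wedge t}-\psi_{t^n_k}}+\bigl(\av{\mu_{t^n_{k+1}\wedge t},\psi_{t^n_k}}-\av{\mu_{t^n_k},\psi_{t^n_k}}\bigr),$$
I apply Definition~\ref{def:MPloc} to the fixed test function $\psi_{t^n_k}\in\Schw$ in the second bracket and write $\psi_{t^n_{k+1}\wedge t}-\psi_{t^n_k}=\int_{t^n_k}^{t^n_{k+1}\wedge t}\partial_s\psi_s\,\dd s$ (this pointwise identity uses assumption~(a)) in the first. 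Summing over $k$ and applying Fubini (legitimate since $|\partial_s\psi_s|\le C\kappa$ and $\av{\mu_r,\kappa}<\infty$ a.s.), I obtain
$$\av{\mu_t,\psi_t}-\av{\mu_0,\psi_0}=\int_0^t\av{\mu_{\lceil s\rceil_n},\partial_s\psi_s}\,\dd s+\frac{\alpha}{2}\int_0^t\av{\mu_s,\Delta\psi_{\lfloor s\rfloor_n}}\,\dd s+N^n_t,$$
where $\lceil s\rceil_n=t^n_{k+1}\wedge t$ and $\lfloor s\rfloor_n=t^n_k$ for $s\in(t^n_k,t^n_{k+1}]$ and $N^n_t:=\sum_k\bigl(M_{t^n_{k+1}\wedge t}(\psi_{t^n_k})-M_{t^n_k\wedge t}(\psi_{t^n_k})\bigr)$. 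Each $M_\cdot(\psi_{t^n_k})$ is a square-integrable martingale, since its quadratic variation $\int_0^\cdot\av{\mu_s,|\nabla\psi_{t^n_k}|^2}\,\dd s\le C\int_0^\cdot\av{\mu_s,\kappa}\,\dd s$ has finite expectation by Lemma~\ref{lemma:technical2}; hence $N^n$ is a continuous square-integrable martingale. Moreover its summands $M_{t^n_{k+1}\wedge\cdot}(\psi_{t^n_k})-M_{t^n_k\wedge\cdot}(\psi_{t^n_k})$ are martingales that are constant off the disjoint intervals $[t^n_k,t^n_{k+1}]$, so their covariations vanish and $[N^n]_t=\int_0^t\av{\mu_s,|\nabla\psi_{\lfloor s\rfloor_n}|^2}\,\dd s$.

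It remains to let $n\to\infty$. The two Lebesgue integrals converge pathwise to $\int_0^t\av{\mu_s,\partial_s\psi_s}\,\dd s$ and $\frac{\alpha}{2}\int_0^t\av{\mu_s,\Delta\psi_s}\,\dd s$ by dominated convergence: the integrands converge for each $s$ (by the continuity statements above, using $\lceil s\rceil_n,\lfloor s\rfloor_n\to s$), and are dominated by $C\av{\mu_{\cdot},\kappa}\le C\sup_{r\le T}\av{\mu_r,\kappa}$, which is $\dd s$-integrable on $[0,T]$ pathwise. For the martingale part, refining partitions and using the disjoint-support orthogonality again gives $\E[N^n-N^m]_T=\E\int_0^T\av{\mu_s,|\nabla(\psi_{\lfloor s\rfloor_n}-\psi_{\lfloor s\rfloor_m})|^2}\,\dd s$, which tends to $0$ as $n,m\to\infty$ (dominated convergence, with dominating function $4C\int_0^T\av{\mu_s,\kappa}\,\dd s\in L^1(\P)$ by Lemma~\ref{lemma:technical2}); by Doob's $L^2$-inequality $(N^n)_n$ is therefore Cauchy in $L^2(\Omega;\C([0,T],\R))$, and its limit $M$ is a continuous, square-integrable martingale. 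Passing to the limit in the displayed identity identifies $M_t$ with the expression in the statement — here one checks separately that $t\mapsto\av{\mu_t,\psi_t}$ is continuous, using $|\psi_t-\psi_{t'}|\le|t-t'|\,C\kappa$ for $t,t'\le T$. Finally $[N^n]_t=\int_0^t\av{\mu_s,|\nabla\psi_{\lfloor s\rfloor_n}|^2}\,\dd s\to\int_0^t\av{\mu_s,|\nabla\psi_s|^2}\,\dd s$ a.s., and since $\E[N^n]_T^2\le TC^2\int_0^T\E\av{\mu_s,\kappa}^2\,\dd s<\infty$ uniformly in $n$ (Lemma~\ref{lemma:technical2}) this convergence is also in $L^1$; passing to the limit in the martingale property of $(N^n)^2-[N^n]$ and using uniqueness of the Doob--Meyer decomposition yields $[M]_t=\int_0^t\av{\mu_s,|\nabla\psi_s|^2}\,\dd s$.

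I expect the main obstacle to be the two limit passages at the level of processes: upgrading the pathwise/a.s.\ convergence of the discretised objects to convergence strong enough (namely in $L^2$, locally uniformly in time) to transfer both the martingale property and the exact form of the quadratic variation to the limit. The two ingredients that make this possible are the a.s.\ finiteness of $\sup_{r\le T}\av{\mu_r,\kappa}$, which drives the pathwise dominated convergence, and the uniform second-moment bound of Lemma~\ref{lemma:technical2}, which supplies the required uniform integrability. A minor but genuine point is the orthogonality of the increments $M_{t^n_{k+1}\wedge\cdot}(\psi_{t^n_k})-M_{t^n_k\wedge\cdot}(\psi_{t^n_k})$ entering the computation of $[N^n]$ and of $[N^n-N^m]$, which holds because each such increment is constant outside its own sub-interval.
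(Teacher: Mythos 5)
Your proposal is correct and follows essentially the same strategy as the paper's proof: telescope along a partition, freeze the test function on each subinterval so that the constant-test-function martingale problem of Definition~\ref{def:MPloc} applies, rewrite the remaining increment of $\psi$ via $\int\partial_s\psi_s\,\dd s$ and Fubini, and pass to the limit using the domination by $C\kappa$ together with Lemma~\ref{lemma:technical2}. The only (immaterial) divergence is in the limit passage for the martingale part: you run a direct $L^2$-Cauchy argument via orthogonality of increments on disjoint subintervals and Doob's inequality, which yields square-integrability and the quadratic variation in one stroke, whereas the paper deduces a.s.\ locally uniform convergence of the martingale terms from that of the drift terms and invokes the stability lemma \cite[Lemma B.11]{Cherny:2005}, establishing square-integrability afterwards.
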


\begin{proof}
The idea of the proof of the lemma is to approximate the function $\psi$ by a piece-wise constant functions of the form 
\[
\psi^n_t:=\sum_{k=1}^\infty \psi_{t^n_k}\I_{(t^n_{k-1},t^n_k]},\quad t\ge 0,
\]
for the partition $t_k^n:=\frac{k}{n}$, $k\in\N_0$, and then use the martingale problem for $\left(\av{\mu_t,\psi_{t^n_k}}\right)_{t\ge 0}$, $k\in\N_0$. We can write for each $n\in\N$ and $t\ge 0$ 
\begin{align*}
	&\av{\mu_t, \psi_t} - \av{\mu_0, \psi_0} = \sum_{k=1}^{\infty}\left( \av{\mu_{t^n_k \wedge t}, \psi_{t^n_k \wedge t}}- \av{\mu_{t^n_{k-1}\wedge t}, \psi_{t^n_{k-1}\wedge t}} \right)\nonumber\\
&\qquad=\sum_{k=1}^{\infty} \left(\av{\mu_{t_{k-1}^n\wedge t}, \psi_{t_{k}^n\wedge t}}-\av{\mu_{t_{k-1}^n\wedge t}, \psi_{t_{k-1}^n\wedge t}} \right)+\frac{\alpha}{2} \sum_{k=1}^\infty \int_{t_{k-1}^n\wedge t}^{t_k^n\wedge t} \av{\mu_s, \Delta\psi_{t^n_{k}}} \dd s\\
&\qquad+\sum_{k=1}^\infty \left(M_{t_{k}^n\wedge t}(\psi_{t_k^n\wedge t})-M_{t_{k-1}^n\wedge t}(\psi_{t_k^n\wedge t})\right).
\end{align*}
Note that in the summations above only a finite number of terms is not vanishing.
Using the fact that the function $(\psi_t(x))_{t\ge 0}$ is continuously differentiable for each $x\in\R^d$, $|\partial_s\psi_s|\le C\kappa$ for each $s\in[0,t]$ and Fubini's theorem, we get
\begin{align*}
    I_1^n(t):&=\sum_{k=1}^{\infty} \left(\av{\mu_{t_{k-1}^n\wedge t}, \psi_{t_{k}^n\wedge t}}-\av{\mu_{t_{k-1}^n\wedge t}, \psi_{t_{k-1}^n\wedge t}} \right)\\
    &=\sum_{k=1}^{\infty}\int_{t_{k-1}^n\wedge t}^{t_{k}^n\wedge t}\av{\mu_{t_{k-1}^n},\partial_s\psi_s}\dd s \\
    &=\int_0^t\av{\tilde\mu_s^n,\partial_s\psi_s}\dd s,
\end{align*}
where $\tilde\mu_s^n=\sum_{k=1}^\infty\mu_{t_{k-1}^n}\I_{(t_{k-1}^n,t_k^n]}$. Due to the continuity of $(\mu_t)_{t\ge 0}$, we conclude that $\tilde\mu_s^n\to\mu_s$ in $\cM_\Schw$ as $n\ge 0$ a.s. for each $s\in[0,t]$. Note that 
\[
\sup_{s\in[0,t]}|\av{\tilde\mu_s^n,\partial_s\psi_s}|\le C\sup_{s\in[0,t]}\av{\tilde\mu_s^n,\kappa}\le C\sup_{s\in[0,t]}\av{\mu_s,\kappa}<\infty\quad \mbox{a.s.}
\]
Thus, by the dominated convergence theorem, we get for each $T>0$
\[
\sup_{t\in[0,T]}\left|I_1^n(t)- \int_0^t\av{\mu_s,\partial_s\psi_s}\dd s\right|\le \int_0^T\left|\av{\tilde\mu_s^n,\partial_s\psi_s}-\av{\mu_s,\partial_s\psi_s}\right|\dd s\to0 \quad \mbox{a.s.}
\]
as $n\to\infty$. Thus, $\{I_1^n\}_{n\ge 1}$ converges to $\int_0^t\av{\mu_s,\partial_s\psi_s}\dd s$, $t\ge 0$, in $\C([0,\infty),\R)$ a.s.

Similarly, we can show that 
\begin{align*}
    I_2^n(t):&=\sum_{k=1}^\infty \int_{t_{k-1}^n\wedge t}^{t_k^n\wedge t} \av{\mu_s, \Delta\psi_{t^n_{k}}} \dd s=\int_0^t\av{\mu_s,\Delta\psi^n_s}\dd s,\quad t\ge 0,
\end{align*}
converges to $\int_0^t\av{\mu_s,\Delta\psi_s}\dd s,$ $t\ge 0$, in $\C([0,\infty),\R)$ a.s. 
as $n\to\infty$, using the fact that $\Delta\psi^n_s\to\Delta\psi_s$ in $\Schw$ and $\sup_{s\in [0,t]}|\Delta\psi^n_s|\le\sup_{s\in [0,t]}|\Delta\psi_s|\le C\kappa$.

Since 
\[
M_{t_{k}^n\wedge t}(\psi_{t_k^n\wedge t})-M_{t_{k-1}^n\wedge t}(\psi_{t_k^n\wedge t})=\begin{cases} 0 & t \leq t_{k-1}^n\\
		M_{t_{k}^n\wedge t}(\psi_{t_k^n}) - M_{t_{k-1}^n}(\psi_{t_k^n})& t > t_{k-1}^n,
		\end{cases}
\]
a simple computation shows that 
\begin{align*}
    I_3^n(t):=\sum_{k=1}^\infty \left(M_{t_{k}^n\wedge t}(\psi_{t_k^n\wedge t})-M_{t_{k-1}^n\wedge t}(\psi_{t_k^n\wedge t})\right),\quad t\ge 0,
\end{align*}
is a continuous martingale with respect to the filtration generated by $(\mu_t)_{t\ge 0}$ with quadratic variation
\begin{align*}
    [I_3^n]_t=\int_0^t \av{\mu_s,|\nabla\psi_s^n|^2}\dd s,\quad t\ge 0.
\end{align*}
According to the equality 
\[
I_3^n(t)=\av{\mu_t, \psi_t} - \av{\mu_0, \psi_0}-I_1^n(t)-\frac{\alpha}{2}I_2^n(t),\quad t\ge 0,
\]
and the convergence of $\{I_1^n\}_{n\ge 1}$ and $\{I_2^n\}_{n\ge 1}$ in $\C([0,\infty),\R)$ a.s., we get that the sequence $\{I_3^n\}_{n\ge 1}$ converges to $(M_t)_{t\ge 0}$ in $\C([0,\infty),\R)$ a.s. as $n\to \infty$. By \cite[Lemma B.11]{Cherny:2005} and the bound $|\nabla \psi_t|^2\le C\kappa$, we conclude that $(M_t)_{t\ge 0}$ is a continuous local martingale with quadratic variation given by~\eqref{eqn:quadratic_variation_of_M}.

The fact that $(M_t)_{t\ge 0}$ is a square-integrable martingale follows from the estimate
\[
\E[M]_t=\E\int_0^t\av{\mu_s,|\nabla \psi_s|^2}\dd s\le C\int_0^t\E\av{\mu_s,\kappa}\dd s
\]
and Lemma~\ref{lemma:technical2}. This completes the proof of the lemma.
\end{proof}

\begin{prop}\label{prop:laplace_duality}
	Let $(\mu_t)_{t \ge 0}$ be a solution to the Dean--Kawasaki equation \eqref{eqn:DK} with initial condition $\nu \in \mathcal M_\Schw$. Then the following Laplace duality
	\begin{equation}
		\mathbb{E} e^{-\av{\mu_t, \varphi}}  = e^{-\av{\nu, V_t \varphi}}
		\label{eqn:Laplace}
	\end{equation}
	 holds for every $\varphi \in \Schw_+$,  where $V_t \varphi = - \alpha \ln\left(P_t e^{-\frac{\varphi}{\alpha}}\right)$, $t \ge 0$, is a solution to the equation \eqref{eqn:hamilto_jacobi_equation}.
\end{prop}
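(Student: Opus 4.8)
The plan is to run the standard duality argument, using the time-reversed Cole--Hopf solution as a (time-dependent) test function and Lemma~\ref{lemma:ito} to kill the drift. Fix $t>0$ and $\varphi\in\Schw_+$, and set $\psi_s:=V_{t-s}\varphi$ for $s\in[0,t]$. Since $\varphi\ge 0$ we have $e^{-\varphi/\alpha}\le 1$, hence $P_{t-s}e^{-\varphi/\alpha}\le 1$ and $\psi_s=-\alpha\ln\!\big(P_{t-s}e^{-\varphi/\alpha}\big)\ge 0$; moreover $\psi_t=V_0\varphi=\varphi$ and $\psi_0=V_t\varphi$. Differentiating the Hamilton--Jacobi equation~\eqref{eqn:hamilto_jacobi_equation} in the time variable $\tau=t-s$ shows that $(\psi_s)$ solves the backward equation
\begin{equation*}
\partial_s\psi_s=-\frac{\alpha}{2}\Delta\psi_s+\frac{1}{2}|\nabla\psi_s|^2,\qquad s\in[0,t].
\end{equation*}
The estimates of Section~\ref{sec:estimates} should guarantee that $s\mapsto\psi_s$ and $s\mapsto\partial_s\psi_s$ are continuous $\Schw$-valued paths and that $|\partial_s\psi_s|+|\nabla\psi_s|^2+|\Delta\psi_s|\le C\kappa$ for $s\in[0,t]$, so $(\psi_s)_{s\in[0,t]}$ satisfies the hypotheses of Lemma~\ref{lemma:ito} on the interval $[0,t]$ (the proof of that lemma is local in time and applies verbatim on any bounded interval, with all bounds imposed only on $[0,T]$).

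Next I would invoke Lemma~\ref{lemma:ito}: the process
\[
M_s=\av{\mu_s,\psi_s}-\av{\mu_0,\psi_0}-\int_0^s\av{\mu_r,\partial_r\psi_r+\tfrac{\alpha}{2}\Delta\psi_r}\,\dd r,\qquad s\in[0,t],
\]
is a square-integrable martingale with $[M]_s=\int_0^s\av{\mu_r,|\nabla\psi_r|^2}\,\dd r$. Hence $X_s:=\av{\mu_s,\psi_s}$ is a continuous semimartingale whose finite-variation part is absolutely continuous with integrand dominated by $C\av{\mu_r,\kappa}$, so one-dimensional It\^o's formula applies to $Y_s:=e^{-X_s}$, giving
\[
\dd Y_s=-Y_s\,\av{\mu_s,\partial_s\psi_s+\tfrac{\alpha}{2}\Delta\psi_s-\tfrac{1}{2}|\nabla\psi_s|^2}\,\dd s-Y_s\,\dd M_s.
\]
By the backward Hamilton--Jacobi equation the drift vanishes, so $(Y_s)_{s\in[0,t]}$ is a continuous local martingale. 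Since $\mu_s\in\cM_\Schw$ is a positive measure and $\psi_s\ge 0$, we have $X_s\ge 0$ and thus $0\le Y_s\le 1$; a bounded local martingale is a true martingale, whence $\E Y_t=\E Y_0$. As $\mu_0=\nu$ is deterministic, $\psi_t=\varphi$, and $\psi_0=V_t\varphi$, this is exactly $\E e^{-\av{\mu_t,\varphi}}=e^{-\av{\nu,V_t\varphi}}$, i.e.\ \eqref{eqn:Laplace}.

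I expect the real work to lie not in this It\^o computation but in its two inputs. The first is the analytic control of the Cole--Hopf solution $V_\tau\varphi$ collected in Section~\ref{sec:estimates}: that it remains in $\Schw$, depends on $\tau$ in a $\C^1$ fashion with $\Schw$-valued time derivative, and obeys the uniform domination by $\kappa$; the nonlinear term $\tfrac12|\nabla v|^2$ and the need to control all derivatives together with the polynomial-versus-exponential tail behaviour make this the genuinely delicate step. The second is the bookkeeping needed to apply Lemma~\ref{lemma:ito} on the finite interval $[0,t]$ despite the non-trivial endpoint value of $\partial_s\psi_s$ at $s=t$, which is resolved by noting the lemma only uses its hypotheses on finite horizons. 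The passage from local martingale to martingale, and the integrability of the finite-variation part, are then immediate from $0\le Y_s\le 1$ and Lemma~\ref{lemma:technical2}.
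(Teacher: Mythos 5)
Your core It\^o/duality computation is exactly the one in the paper, and the backward Hamilton--Jacobi cancellation, the boundedness $0\le Y_s\le 1$, and the local-martingale-to-martingale step are all fine. The gap is in the input you flag but then wave through: the domination $|\partial_s\psi_s|+|\nabla\psi_s|^2+|\Delta\psi_s|\le C\kappa$ required by Lemma~\ref{lemma:ito}(b) is \emph{not} available for a general $\varphi\in\Schw_+$. The paper's Lemma~\ref{lemma:technical1} establishes it only for $\varphi\in\C_c^\infty(\R^d)$, and its proof genuinely uses compact support: one dominates $|\varphi|$ by a Gaussian $p_1$ so that $|P_t\varphi|\le \tilde C p_{t+1}\le C\kappa$ with $\kappa(x)=e^{-|x|}$ for $|x|>1$. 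For a general Schwartz function this fails already at $t=0$: a smooth function behaving like $e^{-\sqrt{|x|}}$ at infinity is in $\Schw_+$ but is not $O(e^{-|x|})$, so $V_{t-s}\varphi$ cannot be dominated by $C\kappa$. Since $\kappa$ is hard-wired into Lemma~\ref{lemma:technical2} and Lemma~\ref{lemma:ito} (via the Gr\"onwall argument needing $|\nabla\kappa|^2\le C\kappa$ and $|\Delta\kappa|\le C\kappa$), you cannot simply swap in a slower-decaying majorant without redoing those lemmas.

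The paper closes this gap with an extra step you omit: it first proves \eqref{eqn:Laplace} for non-negative $\varphi\in\C_c^\infty(\R^d)$ exactly as you do, and then passes to general $\varphi\in\Schw_+$ by choosing non-negative $\varphi_n\in\C_c^\infty(\R^d)$ increasing pointwise to $\varphi$ with $\varphi_n\to\varphi$ in $\Schw$, using dominated convergence on the left-hand side and monotone convergence together with the monotonicity of $V_t$ (Lemma~\ref{lemma:basic}) and the explicit Cole--Hopf formula on the right-hand side. You should either add this approximation step or prove a version of Lemma~\ref{lemma:technical1} valid for all of $\Schw_+$ with a $\varphi$-dependent majorant compatible with Lemmas~\ref{lemma:technical2} and~\ref{lemma:ito}; as written, the direct application of Lemma~\ref{lemma:ito} to $\psi_s=V_{t-s}\varphi$ for arbitrary $\varphi\in\Schw_+$ is unjustified.
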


\begin{proof}
	Let $T\ge 0$ and $\varphi \in \C_c^\infty(\R^d)$ be a non-negative function.  By Lemmas~\ref{lemma:technical3} and~\ref{lemma:technical1} below, the function $\psi=(V_{T-t} \varphi)_{t\in[0,T]}$ satisfies the assumptions of Lemma~\ref{lemma:ito}. Thus, using Lemma~\ref{lemma:ito} and \Itos formula, we get for $t\in[0,T]$,
	\begin{align*}
		\dd  e^{-\av{\mu_t,V_{T-t}\varphi}} 
		&=   e^{-\av{\mu_t, V_{T-t}\varphi}} \left( - \av{\mu_t, \Delta V_{T-t}\varphi + \partial_t (V_{T-t}\varphi) }\dd t + \dd [M]_t - \dd M_t \right)\\
		&=  e^{-\av{\mu_t, V_{T-t}\varphi}} \left( - \av{\mu_t, \Delta V_{T-t}\varphi + \partial_t (V_{T-t}\varphi) - |\nabla V_{T-t}\varphi|^2} \dd t - \dd M_t \right)\\
		&=  -e^{-\av{\mu_t, V_{T-t}\varphi}} \dd M_t. 
	\end{align*}
	Due to $e^{-\av{\mu_s,V_{t-s}\varphi}} \leq 1$, the process $(e^{-\av{\mu_t,V_{T-t}\varphi}})_{t \in [0,T]}$ is a continuous martingale. Thus,
    \begin{equation}\label{eqn:laplace_duality_for_compactly_supportef_functions}
    \E e^{-\av{\mu_T,\varphi}}=\E e^{-\av{\mu_T,V_{T-T}\varphi}}=\E e^{-\av{\mu_0,V_{T}\varphi}}=\E e^{-\av{\nu,V_{T}\varphi}}.
    \end{equation}
    
	For an arbitrary non-negative function $\varphi \in\Schw$, there exists a sequence $\{\varphi_n\}_{n \in \N}$ of non-negative functions from $\C_c^\infty(\R^d)$ such that $\{\varphi_n(x)\}_{n\ge 1}$ increases for each $x\in\R^d$ and $\varphi_n \to \varphi$ in $\Schw$. By the dominated convergence theorem and~\eqref{eqn:laplace_duality_for_compactly_supportef_functions}, for each $t>0$
	\begin{align*}
		\E e^{-\av{\mu_t, \varphi}}  =\lim_{n \to \infty}\E e^{-\av{\mu_t, \varphi_n}}=\lim_{n \to \infty}e^{-\av{\nu,V_{t}\varphi_n}}.
    \end{align*}
    On the other hand, applying the monotone convergence theorem and Lemma~\ref{lemma:basic} and using the precise expression of $V_t\varphi_n$, we obtain
    \begin{align*}		
		\lim_{n \to \infty} e^{-\av{\nu,V_t \varphi_n} } = e^{-\av{\nu, \lim_{n \to \infty}V_t \varphi_n}} = e^{-\av{\nu, V_t \varphi}}.
	\end{align*}
 This completes the proof of the proposition.
\end{proof}

Using the Laplace duality, obtained in Proposition~\ref{prop:laplace_duality}, will prove the uniqueness in law of solutions to the Dean--Kawasaki equation.

\begin{prop}
	Solutions to \eqref{eqn:DK} are unique in law.
	\label{prop:uniqueness}
\end{prop}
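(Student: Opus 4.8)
The plan is to promote the one-time duality of Proposition~\ref{prop:laplace_duality} to a statement about all finite-dimensional distributions of a solution, from which uniqueness in law is immediate. One ingredient is purely measure-theoretic: the law of any $\cM_\Schw$-valued random element $\xi$ is determined by its Laplace functional $\varphi\mapsto\E e^{-\av{\xi,\varphi}}$, $\varphi\in\Schw_+$. Indeed, for $\varphi_1,\dots,\varphi_k\in\Schw_+$ and $\lambda_1,\dots,\lambda_k\ge0$ the combination $\sum_i\lambda_i\varphi_i$ again lies in $\Schw_+$, so the joint Laplace transform, hence the joint law, of the $[0,\infty)^k$-valued vector $(\av{\xi,\varphi_1},\dots,\av{\xi,\varphi_k})$ is prescribed; since for a positive $\xi$ the maps $\av{\cdot,\varphi}$, $\varphi\in\Schw_+$, generate the Borel $\sigma$-algebra of $\cM_\Schw$, the law of $\xi$ follows, and therefore the law of a continuous $\cM_\Schw$-valued process is pinned down by the quantities $\E\exp\big(-\sum_{j=1}^m\av{\mu_{t_j},\varphi_j}\big)$ over $m\in\N$, $0\le t_1<\dots<t_m$ and $\varphi_j\in\Schw_+$ (evaluating at rational times and using path continuity).

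The second ingredient is the conditional form of the duality. The proof of Proposition~\ref{prop:laplace_duality} shows that, for non-negative $\varphi\in\C_c^\infty(\R^d)$ and $T>0$, the bounded continuous process $(e^{-\av{\mu_t,V_{T-t}\varphi}})_{t\in[0,T]}$ is an $(\F_t)$-martingale; the monotone approximation used there, together with dominated convergence for conditional expectations and the monotonicity of $V$ from Lemma~\ref{lemma:basic}, extends this to all $\varphi\in\Schw_+$, yielding
\[
\E\big[e^{-\av{\mu_t,\varphi}}\mid\F_s\big]=e^{-\av{\mu_s,V_{t-s}\varphi}},\qquad 0\le s\le t,\ \varphi\in\Schw_+ .
\]

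Granting these, the finite-dimensional distributions are obtained by successive conditioning. Fix $0<t_1<\dots<t_m$ and $\varphi_1,\dots,\varphi_m\in\Schw_+$ (the case $t_1=0$ is trivial, as $\mu_0=\nu$). Conditioning on $\F_{t_{m-1}}$, the displayed identity replaces $e^{-\av{\mu_{t_m},\varphi_m}}$ by $e^{-\av{\mu_{t_{m-1}},V_{t_m-t_{m-1}}\varphi_m}}$, so the exponent attached to the now-last time $t_{m-1}$ becomes $\av{\mu_{t_{m-1}},h_{m-1}}$ with $h_{m-1}:=\varphi_{m-1}+V_{t_m-t_{m-1}}\varphi_m$; iterating via $h_j:=\varphi_j+V_{t_{j+1}-t_j}h_{j+1}$ and finally invoking Proposition~\ref{prop:laplace_duality} itself, one arrives at
\[
\E\exp\Big(-\sum_{j=1}^m\av{\mu_{t_j},\varphi_j}\Big)=e^{-\av{\nu,\,V_{t_1}h_1}},
\]
an expression built solely from $\nu$, the times $t_j$ and the functions $\varphi_j$. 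Hence all finite-dimensional distributions of a solution started from $\nu$ coincide, and by the first ingredient so does its law on $\C([0,\infty),\cM_\Schw)$. For the iteration to make sense each $h_j$ must lie again in $\Schw_+$, i.e.\ one needs $V_s(\Schw_+)\subseteq\Schw_+$: writing $e^{-\psi/\alpha}=1+h$ with $h\in\Schw$ and $-1<h\le0$, one gets $P_se^{-\psi/\alpha}=1+P_sh$ with $P_sh\in\Schw$ bounded away from $-1$ (as $\psi$ is bounded), so $V_s\psi=-\alpha\ln(1+P_sh)$ is again a non-negative Schwartz function; this can also be read off from the estimates of Section~\ref{sec:estimates}.

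The step requiring the most care is the conditional duality — one must check that the martingale property of $(e^{-\av{\mu_t,V_{T-t}\varphi}})_t$, and not merely the resulting one-time identity, survives the passage from $\C_c^\infty$ to $\Schw_+$ — together with the closedness of $\Schw_+$ under $V_s$ that allows the duality to be reapplied along the iteration; the measure-theoretic reduction to Laplace functionals is routine. In fact the first two ingredients say precisely that every solution is a Markov process whose one-dimensional marginals are determined by the initial datum, so that uniqueness in law follows from a standard argument; carrying out the iteration explicitly, as above, avoids appealing to general Markov-selection theory on the possibly non-metrizable state space $\cM_\Schw$.
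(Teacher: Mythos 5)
Your argument is correct, and for the decisive step it takes a genuinely different route from the paper. Both proofs rest on Proposition~\ref{prop:laplace_duality} and both reduce the law of an $\cM_\Schw$-valued process to joint Laplace functionals (the paper does this via the Dobrushin--Minlos identification of $\cB(\Schw')$ with the cylindrical $\sigma$-algebra and Kallenberg's uniqueness theorems, passing through $\C_c$-test functions and $\varphi=\varphi_+-\varphi_-$; your reduction via $\varphi\mapsto\E e^{-\av{\xi,\varphi}}$, $\varphi\in\Schw_+$, is the same in substance). Where you diverge is in passing from one-dimensional to finite-dimensional distributions: the paper invokes the general martingale-problem machinery of Ethier--Kurtz (regular conditional probabilities, restarting the martingale problem), whereas you upgrade the duality to its conditional form $\E[e^{-\av{\mu_t,\varphi}}\mid\F_s]=e^{-\av{\mu_s,V_{t-s}\varphi}}$ --- which is exactly the martingale property of $(e^{-\av{\mu_u,V_{t-u}\varphi}})_u$ established inside the proof of Proposition~\ref{prop:laplace_duality} --- and then compute every joint Laplace transform explicitly by backward recursion $h_j=\varphi_j+V_{t_{j+1}-t_j}h_{j+1}$. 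The two points you flag as needing care are indeed the right ones and both hold: the extension of the conditional identity from $\C_c^\infty$ to $\Schw_+$ goes through by monotone approximation and conditional dominated convergence, and the closedness $V_s(\Schw_+)\subseteq\Schw_+$ is exactly Lemma~\ref{lemma:technical3} combined with $P_se^{-\varphi/\alpha}\le 1$. Your approach buys a self-contained, fully explicit formula for the finite-dimensional distributions and sidesteps any appeal to abstract results on the (non-metrizable) state space $\Schw'$, at the cost of having to verify the semigroup stability of $\Schw_+$ under $V_s$; the paper's route is shorter on the page but outsources the conditioning argument to a cited theorem whose hypotheses on the state space deserve the same scrutiny you give your iteration.
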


\begin{proof}
 Let $(\mu_t^1)_{t\ge 0}$ and $(\mu_t^2)_{t\ge 0}$ be solutions to the Dean--Kawasaki equation~\eqref{eqn:DK} started from the same distribution $\nu\in\cM_\Schw$ and defined possibly on different probability spaces $(\Omega^1,\F^1,\P^1)$ and $(\Omega^2,\F^2,\P^2)$, respectively. We will show that their finite distributions coincides, that is, for each $n\in\N$, $0<t_1<t_2<\ldots<t_n$ and $B_i\in\cB(\cM_\Schw)$, $i\in[n]$, the equality
 \[
 \P^1\left\{\mu^1_{t_1}\in B_1,\ldots,\mu^1_{t_n}\in B_n\right\}=\P^2\left\{\mu^2_{t_1}\in B_1,\ldots,\mu^2_{t_n}\in B_n\right\}
 \]
 holds, where $\cB(\cM_\Schw)$ denotes the Borel $\sigma$-algebra on $\cM_\Schw$.

 We first show that one-dimensional distributions of $(\mu^i_t)_{t\ge 0}$, $i\in[2]$, coincide. 
 According to \cite[Proposition~2.1]{Dobrusin_Minlos:1976}, the Borel $\sigma$-algebra on $\Schw'$ coincides with the cylindrical $\sigma$-algebra generated by maps $\mu\mapsto\av{\mu,\varphi}$, $\varphi\in\Schw$. 
 Recall next that every element of $\cM_\Schw$ is a locally-finite measure. Therefore, the map $\mu\mapsto\av{\mu,\varphi}$ is well defined for each $\varphi\in\C_c(\R^d)$. Consequently, $\cB(\cM_\Schw)$ equals to the cylindrical $\sigma$-algebra $\Sigma(\cM_\Schw)$ on $\cM_\Schw$ generated by maps $\mu\mapsto\av{\mu,\varphi}$, $\varphi\in\C_c(\R^d)$, by the approximation argument. 
 Thus, by~\cite[Theorem~3.1]{kallenberg1983random}, it is enough to show that for each $t\ge 0$ and $\varphi\in\C_c(\R^d)$ the random variables $\av{\mu^i_t,\varphi}$, $i\in[2]$, have the same distribution. But this follows from Proposition~\ref{prop:laplace_duality}. Indeed, applying~\cite[Theorem~15.5.1]{kallenberg1983random} to the Laplace transform of the distributions $\law\left(\av{\mu^i,\varphi_1},\ldots,\av{\mu^i_t,\varphi_m}\right)$, $i\in[2]$, and using Proposition~\ref{prop:laplace_duality}, we obtain the equality 
 \begin{equation}\label{eqn:equality_of_laws} \law\left(\av{\mu^1_t,\varphi_1},\ldots,\av{\mu^1_t,\varphi_m}\right)=\law\left(\av{\mu^2_t,\varphi_1},\ldots,\av{\mu^2_t,\varphi_m}\right)
 \end{equation} 
 for each $m\in\N$ and $\varphi_j\in\Schw_+$, $j\in[m]$. According to the approximation argument, the equality~\eqref{eqn:equality_of_laws} is true for non-negative functions $\varphi_j$ from $\C_c(\R^d)$. Hence, $\law\av{\mu^1_t,\varphi}=\law\av{\mu^2_t,\varphi}$ for all $\varphi\in\C_c(\R^d)$ due to the equality $\varphi=\varphi_+-\varphi_-$. 
 
 The equality of finite-dimensional distributions of the processes $(\mu^i_t)_{t\ge 0}$, $i\in[2]$, follows now from the same argument as in the proof of \cite[Theorem~3.4.2]{ethierkurtz}, using the fact that they both are solutions to the same martingale problem.
\end{proof}

We are now ready to prove the main result of this work.
\begin{proof}[Proof of Theorem~\ref{theo:main}]
    The proof is similar to the proof of~\cite[Theorem~2.2]{konarovskyi2019dean}. The only difference is that now we have to deal with locally-finite measures that are tempered distributions and rapidly decreasing test function. This creates additional difficulties which can not be overcome straightforward.

    The uniqueness of solutions was obtained in Proposition~\ref{prop:uniqueness}.  The existence of a solution to the Dean--Kawasaki equation~\eqref{eqn:DK} started from     \begin{equation}\label{eqn:form_of_initial_condition}
    \nu= \frac{1}{\alpha}\sum_{k \in I} \delta_{x_k}
    \end{equation} 
    for some family $x_k\in\R^d$, $k\in I\subset \N$, follows from \Itos formula if $I$ is finite, and is the statement of Lemma~\ref{lem:existence_of_solution} for infinite $I$. It remains only to show that if $(\mu_t)_{t\ge 0}$ is a solution to~\eqref{eqn:DK}, then its initial condition must be given by~\eqref{eqn:form_of_initial_condition}. 

    For fixed $t>0$ and $a_k,b_k\in\R$, $k\in[d]$, with $a_k<b_k$ we define the function
    \[
      g(s) = \E s^{\alpha \mu_t(A)},\quad s\in (0,1],
    \]
    where $A=\prod_{k=1}^d[a_k,b_k)$. We will first prove that 
    \begin{equation}\label{eqn:expansion_for_g}
    g(s) = \sum_{k=0}^N s^k p_k + o(s^N)
    \end{equation}
    as $s \downarrow 0$ for every $N\in\N$ and some sequence $\{p_k\}_{k \in \N_0}$. This fact will imply that $\alpha\mu_t(A)$ can take only non-negative integer values a.s., according to \cite[Lemma~2.4]{konarovskyi2019dean} which is rather an intuitive statement that only the probability generating functions of non-negative integer-valued random variables admit a Taylor's expansion at zero. 
    To check the expansion~\eqref{eqn:expansion_for_g}, we will first show that the function $g$ coincides on $(0,1]$ with an infinitely differentiable function in a neighborhood of zero and then will use Taylor's theorem.
    
    Take a sequence $\{\varphi_n\}_{n\ge 1}\in\C^\infty_c(\R^d)$ such that $\varphi_n\downarrow \I_A$ pointwise as $n\to\infty$. By the dominated convergence theorem, and the estimate $0\le V_t(r\alpha\varphi_n)\le V_t(r\alpha\varphi_1)\in\Schw$ (see Lemmas~\ref{lemma:basic} and~\ref{lemma:technical3}), we get 
    \[
    \av{\mu_t,r\alpha\varphi_n}\to r\alpha\mu_t(A)\quad\mbox{a.s.}\ \ \mbox{and}\quad  \av{\nu,V_t(r\alpha\varphi_n)}\to \av{\nu,V_t(r\alpha\I_A)}
    \]
    as $n\to\infty$ for each $r\ge 0$. Again recall that $V_t \varphi = - \alpha \ln\left(P_t e^{-\frac{\varphi}{\alpha}}\right)$. Thus, using the dominated convergence theorem again and Proposition~\ref{prop:laplace_duality}, we get
    \[
    \E e^{-r\alpha\mu_t(A)}=e^{-\av{\nu,V_t(r\alpha\I_A)}}=e^{\alpha\av{\nu,\ln\left(P_te^{-r\I_A}\right)}}
    \]
    for each $r\ge 0$. Setting $s=e^{-r}$ and using the quality $P_te^{-r\I_A}=1+(s-1)h$ for $h:=P_t\I_A\in \Schw$, we have
    \begin{equation}\label{eqn:form_of_g}
    g(s)=e^{\alpha\av{\nu,\ln(1+(s-1)h)}},\quad s\in(0,1].
    \end{equation}
    Note that there exists $\delta \in (0,1)$ such that $0\le h(x)\le \delta$ for all $x\in\R^d$. We take $\delta_1>0$ satisfying $\gamma:=(1+\delta_1)(1-\delta)<1$ and choose a constant $C>0$ such that $-\ln(1-r)\le C r$ for all $r\in[0,\gamma]$. 
    Then
    \begin{equation*}
      0\le-\ln(1+(s-1)h)\le C(1-s)h\le C(1+\delta_1)h.
    \end{equation*}
    Therefore, the function in the right hand side of~\eqref{eqn:form_of_g} is well defined on $(-\delta,1]$.
    We also note that the derivatives of each order of the function $f(s,\cdot):=\ln(1+(s-1)h)$ are bounded by
    \[
    \left|\frac{\partial^k}{\partial s^k}f(s,x)\right|=(k-1)!\frac{h^k(x)}{\left(1+(s-1)h(x)\right)^k}\le\frac{(k-1)!}{1-\gamma}h^k(x)
    \]
    for all $x\in \R^d$ and $s\in[-\delta_1,1]$. Hence, they are uniformly (in $s\in[-\delta_1,1]$) integrable with respect to the measure $\nu$ because $h^k\in\Schw$. This yields that the function $\tilde g$ is infinitely differentiable in a neighbourhood of zero. Applying Taylor's formula to $\tilde g$ and using the equality \eqref{eqn:form_of_g}, we get the expansion~\eqref{eqn:expansion_for_g}.

Using \cite[Lemma~2.4]{konarovskyi2019dean}, we obtain that $\alpha\mu_t(A)\in\N_0$ for each rectangle $A=\prod_{k=1}^d[a_k,b_k)$ a.s. By the monotone class theorem (see \cite[Theorem~1.1]{FOMP}), it is easy to see that $\mu_t(A)\in\N_0$ for all bounded $A\in\cB(\R^d)$ a.s. Now, using the continuity of $(\mu_t)_{t\ge 0}$ and Lemma~\ref{lemma:Portmanteau}, we can conclude that $\mu_0=\nu$ takes values in $\N_0\cup\{+\infty\}$, and, therefore, it is defined by~\eqref{eqn:form_of_initial_condition}. This completes the proof of the theorem.
\end{proof}

We finish this section with a simple observation that the Laplace duality can be used for the investigation of an invariant measure for the Dean--Kawasaki dynamics on the space of tempered distribution.
\begin{lemma}
    Let $\Xi$ be a Poisson point process. Then the law of $\frac{1}{\alpha} \Xi$ is an invariant measure for the Dean-Kawasaki equation \eqref{eqn:DK}.
\end{lemma}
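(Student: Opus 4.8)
The plan is to verify directly that the stationarity of $\mathrm{Law}(\tfrac{1}{\alpha}\Xi)$ follows from the Laplace duality~\eqref{eqn:Laplace} together with the well-known explicit form of the Laplace functional of a Poisson point process. Write $\lambda$ for the intensity measure of $\Xi$, so that for every $\varphi\in\Schw_+$ one has the classical identity
\begin{equation*}
  \E e^{-\av{\Xi,\varphi}} = \exp\left(-\int_{\R^d}\left(1-e^{-\varphi(x)}\right)\lambda(\dd x)\right).
\end{equation*}
If $(\mu_t)_{t\ge 0}$ is the solution to~\eqref{eqn:DK} started from an initial condition $\mu_0$ whose law is $\mathrm{Law}(\tfrac{1}{\alpha}\Xi)$ (existence and uniqueness in law being guaranteed by Theorem~\ref{theo:main}, since $\tfrac{1}{\alpha}\Xi$ is a.s. a locally finite sum of Dirac masses with weights $\tfrac{1}{\alpha}$ lying in $\cM_\Schw$ once $\lambda$ has suitably decaying tails), then conditioning on $\mu_0$ and applying Proposition~\ref{prop:laplace_duality} gives
\begin{equation*}
  \E e^{-\av{\mu_t,\varphi}} = \E e^{-\av{\mu_0, V_t\varphi}} = \E e^{-\av{\Xi, \frac{1}{\alpha}V_t\varphi}} = \exp\left(-\int_{\R^d}\left(1-e^{-\frac{1}{\alpha}V_t\varphi(x)}\right)\lambda(\dd x)\right).
\end{equation*}

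The next step is to simplify the exponent using the Cole--Hopf formula $V_t\varphi=-\alpha\ln\!\left(P_te^{-\varphi/\alpha}\right)$, which yields $e^{-\frac{1}{\alpha}V_t\varphi}=P_te^{-\varphi/\alpha}$, hence
\begin{equation*}
  1-e^{-\frac{1}{\alpha}V_t\varphi(x)} = 1 - P_te^{-\varphi/\alpha}(x) = P_t\!\left(1-e^{-\varphi/\alpha}\right)(x),
\end{equation*}
using that $P_t$ is a Markov (conservative) semigroup, so $P_t\mathbf 1=\mathbf 1$. Substituting back,
\begin{equation*}
  \E e^{-\av{\mu_t,\varphi}} = \exp\left(-\int_{\R^d} P_t\!\left(1-e^{-\varphi/\alpha}\right)(x)\,\lambda(\dd x)\right) = \exp\left(-\int_{\R^d}\left(1-e^{-\varphi(x)/\alpha}\right)(P_t^*\lambda)(\dd x)\right),
\end{equation*}
where $P_t^*$ denotes the adjoint heat flow on measures. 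Therefore $\mu_t$ is again a (scaled) Poisson point process, now with intensity $P_t^*\lambda$. To conclude invariance one needs $P_t^*\lambda=\lambda$ for all $t\ge0$; the natural choice making the statement true is $\lambda=c\,\mathrm{Leb}$ for a constant $c>0$ (the Lebesgue measure being the stationary, though infinite, measure of the heat semigroup), and indeed this is presumably the intended reading of ``a Poisson point process'' in the statement — a homogeneous one. Under that reading $P_t^*\mathrm{Leb}=\mathrm{Leb}$ by translation invariance and conservativeness of $P_t$, so $\E e^{-\av{\mu_t,\varphi}}=\E e^{-\av{\mu_0,\varphi}}$ for all $\varphi\in\Schw_+$, and $t\ge0$.

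It then remains to promote this equality of one-dimensional Laplace functionals to equality of laws on $\cM_\Schw$: since $\varphi$ ranges over all of $\Schw_+$ (and hence, by the $\varphi=\varphi_+-\varphi_-$ decomposition and density, over enough functionals to separate measures), equality of $\E e^{-\av{\mu_t,\varphi}}$ forces $\mathrm{Law}(\mu_t)=\mathrm{Law}(\mu_0)$ on $\cM_\Schw$, exactly as in the measurability discussion in the proof of Proposition~\ref{prop:uniqueness} (using \cite[Theorem~3.1]{kallenberg1983random} and \cite[Proposition~2.1]{Dobrusin_Minlos:1976}). Since $t$ was arbitrary and the transition mechanism is time-homogeneous, $\mathrm{Law}(\tfrac{1}{\alpha}\Xi)$ is invariant. \textbf{Main obstacle.} The only genuinely delicate point is justifying a priori that $\tfrac{1}{\alpha}\Xi\in\cM_\Schw$ almost surely for a homogeneous Poisson process and that all the interchanges of expectation and the $n\to\infty$ truncations needed to apply Proposition~\ref{prop:laplace_duality} (which is stated for $\cM_\Schw$-valued solutions and $\Schw_+$ test functions) go through; a homogeneous Poisson point process on $\R^d$ has a.s. infinitely many points but with polynomial (indeed sub-linear) growth of mass on balls, so $\av{\Xi,|\varphi|}<\infty$ a.s. for $\varphi\in\Schw$ by a routine Borel--Cantelli / first-moment estimate, placing $\tfrac1\alpha\Xi$ in $\cM_\Schw$; checking this carefully, and confirming that Theorem~\ref{theo:main} indeed applies to produce the corresponding solution, is the part that requires care rather than new ideas.
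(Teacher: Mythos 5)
Your proof is correct and follows essentially the same route as the paper's: the Laplace functional of the Poisson process, the duality of Proposition~\ref{prop:laplace_duality} combined with the Cole--Hopf identity $e^{-\frac{1}{\alpha}V_t\varphi}=P_te^{-\varphi/\alpha}$ and $P_t\mathbf 1=\mathbf 1$, invariance of the intensity under the heat flow, and then the identification-of-laws argument from Proposition~\ref{prop:uniqueness}. Your observation that the statement implicitly requires a $P_t$-invariant (i.e.\ Lebesgue) intensity is accurate and matches the paper, whose proof tacitly uses $\av{\lambda,P_tf}=\av{\lambda,f}$ after computing $\E\av{\Xi,\psi}=\int_{\R^d}\psi(x)\,\dd x$ via Campbell's formula.
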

\begin{proof}
        We first note that the Poisson point process belongs to $\cM_\cS$ a.s. Indeed, taking e.g. $\psi(x)=\frac{1}{1+|x|^{d+1}}$, $x\in\R^d$, and using Campbell’s formula, we get
        \[
          \E\av{\Xi,\psi}=\int_{\R^d}\psi(x)dx<\infty.
        \]
        This implies that $\P\{\Xi\in\cM_{\Schw}\}=\P\left\{\av{\Xi,|\varphi|}<\infty,\  \varphi\in\Schw\right\}=1$ due to the fact that for each $\varphi\in\Schw$ there exists $C>0$ such that $|\varphi|\le C\psi$. Thus, a solution $(\mu_t)_{t\ge 0}$ to~\eqref{eqn:DK} started from $\frac{1}{\alpha} \Xi$ is well-defined, by Lemma~\ref{lem:existence_of_solution}. Our goal is to show that the distributions of $\mu_t$ and $\frac{1}{\alpha} \Xi$ coincide for each $t>0$. 

        By the Laplace transform of a Poisson point process (see~\cite[Secton~1.3]{kallenberg1983random}) and a standard approximation argument, we obtain
        \[
        \E e^{-\av{\frac{1}{\alpha}\Xi, \varphi}}  =e^{-\av{\lambda, 1- e^{-\frac{\varphi}{\alpha}}}}
        \]
        for all $\varphi\in\Schw_+$. Hence, for every $\varphi\in\Schw_+$ Lemma~\ref{prop:laplace_duality} and the disintegration theorem (\cite[Theorems~2.1.7 and~2.2.5]{ito1984}) yield
        \begin{align*}
        \E\left( e^{-\av{\mu_t, \varphi}} \right) &= \E\left( e^{-\av{\mu_0, V_t \varphi}}\right)
        =  e^{ - \av{\lambda,1-e^{-\frac{1}{\alpha} V_t \varphi}} }\\
        &=e^{ - \av{\lambda,1-P_t e^{-\frac{\varphi}{\alpha}}} }
        = e^{ - \av{\lambda, P_t(1-e^{-\frac{\varphi}{\alpha}})} } \\
        &= e^{ - \av{\lambda, 1-e^{-\frac{\varphi}{\alpha}}} }=\E e^{-\av{\frac{1}{\alpha}\Xi, \varphi}}.
\end{align*}
        Applying the same argument as in the proof of Proposition~\ref{prop:uniqueness}, we finish the proof of the statement.
\end{proof}

\appendix

\section{Appendix}

\subsection{Some estimates of solutions to the heat equation and Hamilton-Jacobi equation}
\label{sec:estimates}

We recall that $P_t$ is the heat semigroup whose generator is given by $\frac{\alpha}{2}\Delta$ and $V_t=-\alpha\ln\left(P_te^{-\frac{1}{\alpha}\varphi}\right)$ defines a solution to the Hamilton-Jacobi equation~\eqref{eqn:hamilto_jacobi_equation} with initial condition $\varphi$. In this section, we will collect some properties of the operators $P_t$ and $V_t$ used for the proof of the main result.

\begin{lemma}
	Let $\varphi,\psi \in \C_b(\R^d)$ and $\varphi\le \psi$. Then $V_t \varphi \leq V_t \psi$ for all $t\ge 0$. 
	\label{lemma:basic}
\end{lemma}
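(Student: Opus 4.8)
The statement to prove is Lemma~\ref{lemma:basic}: monotonicity of the Cole--Hopf solution operator $V_t$ with respect to the initial datum, for bounded continuous functions.

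The plan is to reduce the claim to the monotonicity of the heat semigroup $P_t$ together with the monotonicity of the functions $r \mapsto e^{-r/\alpha}$ and $r\mapsto -\alpha\ln r$. Concretely, suppose $\varphi\le\psi$ with $\varphi,\psi\in\C_b(\R^d)$. First I would observe that since $r\mapsto e^{-r/\alpha}$ is strictly decreasing on $\R$, we have $e^{-\psi(x)/\alpha}\le e^{-\varphi(x)/\alpha}$ for all $x\in\R^d$, i.e. $e^{-\psi/\alpha}\le e^{-\varphi/\alpha}$ pointwise; note both are bounded continuous (in fact positive and bounded away from $0$ since $\varphi,\psi$ are bounded), so $P_t$ applies to them. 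Next, since $P_t$ is given by convolution against the (positive) Gaussian heat kernel, it is a positivity-preserving linear operator, hence monotone: $P_t e^{-\psi/\alpha}\le P_t e^{-\varphi/\alpha}$ pointwise for every $t\ge 0$. Moreover both $P_t e^{-\varphi/\alpha}$ and $P_t e^{-\psi/\alpha}$ are strictly positive (being integrals of a strictly positive kernel against a strictly positive function), so the logarithm is defined. Finally, since $\ln$ is increasing and $-\alpha<0$, applying $-\alpha\ln(\cdot)$ reverses the inequality: $-\alpha\ln\bigl(P_t e^{-\psi/\alpha}\bigr)\ge -\alpha\ln\bigl(P_t e^{-\varphi/\alpha}\bigr)$, which is exactly $V_t\psi\ge V_t\varphi$, as claimed.

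The only point that requires a word of justification is that $P_t$ is indeed monotone and that the logarithm is applied to a legitimate (strictly positive, finite) quantity; this is immediate from the explicit Gaussian representation $P_t f(x)=\int_{\R^d} p_t(x-y)f(y)\,\dd y$ with $p_t$ the centered Gaussian density of variance $\alpha t$ (and $P_0=\mathrm{id}$), because boundedness of $\varphi,\psi$ gives $0<e^{-\|\varphi\|_\infty/\alpha}\le P_t e^{-\varphi/\alpha}\le e^{\|\varphi\|_\infty/\alpha}$ and likewise for $\psi$, so everything stays in the domain where $\ln$ is smooth and increasing. I do not anticipate a genuine obstacle here: the lemma is a soft ``composition of monotone maps'' argument, and the main (minor) care is simply to keep track of the two sign reversals coming from $e^{-\cdot/\alpha}$ and $-\alpha\ln(\cdot)$, which cancel to yield an order-preserving, rather than order-reversing, map $V_t$.
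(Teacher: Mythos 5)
Your argument is correct and is exactly the paper's: the paper's proof consists of the single remark that the claim follows from the positivity (hence monotonicity) of $P_t$, and your write-up just fills in the two sign reversals from $e^{-\cdot/\alpha}$ and $-\alpha\ln(\cdot)$ together with the observation that $P_te^{-\varphi/\alpha}$ stays strictly positive so the logarithm is legitimate. No issues.
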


\begin{proof}
The statement of the lemma directly follows from the monotonicity of the heat semigroup $P_t$, i.e. $P_t\varphi\ge 0$ for $\varphi\ge 0$.
\end{proof}

\begin{lemma}
    Let $\varphi\in\Schw$. Then $(P_t\varphi)_{t\ge 0}$ and $(\partial_tP_t\varphi)_{t\ge 0}$ belong to $\C([0,\infty),\Schw)$. Moreover, $\P_t|\varphi|\in\Schw$ for each $t>0$.
    \label{lemma:Schw1}
\end{lemma}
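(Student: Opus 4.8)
\textbf{Proof plan for Lemma~\ref{lemma:Schw1}.}

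The plan is to exploit the Gaussian convolution representation of the heat semigroup. Writing $P_t \varphi = \varphi * p_t$, where $p_t(x) = (2\pi\alpha t)^{-d/2} e^{-|x|^2/(2\alpha t)}$ is the heat kernel associated with the generator $\frac{\alpha}{2}\Delta$, the first observation is that $p_t \in \Schw$ for every fixed $t>0$ and that all its spatial derivatives decay rapidly. First I would establish the smoothness and decay of $P_t\varphi$ for fixed $t>0$: since $D^\beta(P_t\varphi) = \varphi * D^\beta p_t$, and $\varphi$ is bounded while $D^\beta p_t$ is integrable with rapidly decaying tails, one gets $|D^\beta(P_t\varphi)(x)| \le \int_{\R^d} |\varphi(x-y)|\,|D^\beta p_t(y)|\,\dd y$. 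To control $|x|^n$ times this, I would split the integral into the region $|y| \le |x|/2$ and $|y| > |x|/2$; on the first region $|x-y| \ge |x|/2$ so one uses the rapid decay of $\varphi$ (i.e.\ $\|\varphi\|_{0,n+d+1}$-type seminorms), and on the second region one uses $|y| > |x|/2$ together with the rapid decay of $D^\beta p_t$. This standard splitting shows $\|P_t\varphi\|_{\beta,n} < \infty$ for all $\beta, n$, hence $P_t\varphi \in \Schw$; the very same estimate applied to $|\varphi| \in \C_b(\R^d)$ (which still has rapid decay, with the same seminorm bounds) gives $P_t|\varphi| \in \Schw$ for each $t>0$.

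Next I would address continuity of $t \mapsto P_t\varphi$ into $\Schw$, including at $t=0$. For $t>0$ one has $D^\beta(P_t\varphi) = P_t(D^\beta\varphi)$ since $\varphi$ and all its derivatives are bounded, so it suffices to show $t \mapsto P_t\psi$ is continuous in each seminorm $\|\cdot\|_{0,n}$ for arbitrary $\psi \in \Schw$ (then apply with $\psi = D^\beta\varphi$). Continuity on $(0,\infty)$ follows from the semigroup property and continuity at $0^+$: $P_{t+h}\psi - P_t\psi = P_t(P_h\psi - \psi)$, and one checks $P_t$ maps a sequence bounded in the relevant seminorm to one whose $\|\cdot\|_{0,n}$ seminorm is controlled. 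For continuity at $t=0$, I would write $(P_t\psi - \psi)(x) = \int_{\R^d}(\psi(x-\sqrt{\alpha t}\,z) - \psi(x)) g(z)\,\dd z$ with $g$ the standard Gaussian density, use the mean value theorem to bound the integrand by $\sqrt{\alpha t}\,|z|\,\sup_{|w|\le\sqrt{\alpha t}|z|}|\nabla\psi(x-w)|$, and then multiply by $|x|^n$ and split again into $|z| \le |x|/(2\sqrt{\alpha t})$ and its complement to absorb the $|x|^n$ factor into the rapid decay of $\psi$ (for small $t$) or of $g$. This yields $\|P_t\psi - \psi\|_{0,n} \to 0$ as $t\to 0^+$. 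The statement for $(\partial_t P_t\varphi)_{t\ge 0}$ reduces to the above since $\partial_t P_t\varphi = \frac{\alpha}{2}\Delta P_t\varphi = \frac{\alpha}{2} P_t(\Delta\varphi)$ (valid including at $t=0$ because $\Delta\varphi \in \Schw \subset \C_b$), so $(\partial_t P_t\varphi)_{t\ge 0} = \frac{\alpha}{2}(P_t(\Delta\varphi))_{t\ge 0}$, which is in $\C([0,\infty),\Schw)$ by what was just proved applied to $\Delta\varphi \in \Schw$.

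The main obstacle I anticipate is the bookkeeping in the weighted estimates: controlling $|x|^n |D^\beta(P_t\varphi)(x)|$ uniformly, and more delicately controlling $\|P_t\psi - \psi\|_{0,n}$ uniformly for small $t$ while keeping the $t$-dependence explicit enough to see it vanishes. The dyadic split $|y|\le|x|/2$ versus $|y|>|x|/2$ (and its $t$-scaled analogue) is the workhorse, but one must be careful that the constants produced are finite and, in the continuity argument, genuinely $o(1)$ as $t\to 0$ rather than merely bounded. None of this is deep, but it requires choosing the right seminorms of $\varphi$ (and of $\Delta\varphi$, $D^\beta\varphi$) to dominate each piece; everything else is a routine application of dominated convergence.
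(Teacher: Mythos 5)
Your plan is correct, but it follows a genuinely different route from the paper. You work entirely in physical space with the Gaussian kernel: $D^\beta(P_t\varphi)=\varphi*D^\beta p_t$, the dyadic split $|y|\le|x|/2$ versus $|y|>|x|/2$ to trade the weight $|x|^n$ against the rapid decay of either $\varphi$ or $D^\beta p_t$, and the mean-value/scaling argument for $\|P_t\psi-\psi\|_{0,n}\to0$; in particular you obtain $P_t|\varphi|\in\Schw$ by observing that the smoothness is supplied by the kernel while the decay estimates only use boundedness and rapid decay of $|\varphi|$. The paper instead passes to the Fourier side: it cites a textbook result (proved via the identity $\widehat{P_t\varphi}=\widehat\varphi\,e^{-\frac{\alpha}{2}t|\sigma|^2}$) for the continuity of $(P_t\varphi)_{t\ge0}$ and $(\partial_tP_t\varphi)_{t\ge0}$ in $\C([0,\infty),\Schw)$, and for $P_t|\varphi|$ it notes that although $|\varphi|$ is not smooth, it is rapidly decreasing, so $\widehat{|\varphi|}$ is smooth with bounded derivatives, and multiplication by the Gaussian $e^{-\frac{\alpha}{2}t|\sigma|^2}$ then yields a Schwartz function whose inverse transform is $P_t|\varphi|$. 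So the two arguments are dual to one another: on the Fourier side the multiplier supplies the decay and the initial datum supplies the smoothness, while in your convolution argument the kernel supplies the smoothness and the datum supplies the decay. Your approach is more self-contained and elementary but requires the weighted bookkeeping you rightly flag (in particular the uniform-in-$t$ bound $\|P_tf\|_{0,n}\lesssim\|f\|_{0,n}+\|f\|_{0,0}$ needed to propagate continuity from $t=0$ via the semigroup identity); the paper's is shorter because the heat semigroup is diagonal in Fourier variables and the $t$-continuity of $\sigma\mapsto e^{-\frac{\alpha}{2}t|\sigma|^2}$ as a multiplier on $\Schw$ is immediate. Both correctly reduce the statement about $\partial_tP_t\varphi$ to the one about $P_t(\Delta\varphi)$.
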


\begin{proof}
   The first part of the lemma is, e.g., the statement of \cite[Theorem~5.3]{Hunter:2014}, that is proved via the Fourier transform. The second part of the lemma can be proved similarly, using the fact that the Fourier transform of Schwartz functions is again a Schwartz function. Indeed, let as usually $\widehat\psi$ denote the Fourier transform of an integrable function $\psi$. Since the function $|\varphi|$ is rapidly-decreasing for $\varphi\in\Schw$, we conclude that its Fourier transform $\widehat{|\varphi|}$ is infinitely differentiable whose derivatives are integrable. Thus,
   \[
   \widehat{P_t|\varphi|}(\sigma)=\widehat{|\varphi|}(\sigma) e^{-\frac{\alpha}{2} t|\sigma|^2},\quad \sigma\in\R^d,
   \]
   is smooth rapidly-decreasing function for every $t>0$. This completes the proof of the lemma.
\end{proof}

\begin{corollary}
    For every $T>0$, $\varphi \in \Schw$ and $\mu\in\cM_\Schw$ we have $\sup_{t \in [0,T]} \av{\mu, |P_t \psi|} < \infty$.
    \label{corollary}
\end{corollary}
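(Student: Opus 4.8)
The plan is to reduce the statement to a pointwise bound on the heat semigroup that is uniform for $t\in[0,T]$, and then integrate that bound against $\mu$. Since $|P_t\varphi|\le P_t|\varphi|$ pointwise, it suffices to control $\sup_{t\in[0,T]}\av{\mu,P_t|\varphi|}$. Using the probabilistic representation $P_tf(x)=\E f(x+\sqrt{\alpha t}\,\xi)$, where $\xi$ is a standard $d$-dimensional Gaussian vector, the core of the argument will be the claim that for every $N\in\N$ there is a constant $C_N=C_N(\varphi,T,\alpha,d)$ with
\begin{equation}\label{eqn:cor_uniform_heat_bound}
	\sup_{t\in[0,T]}P_t|\varphi|(x)\le C_N(1+|x|)^{-N},\qquad x\in\R^d.
\end{equation}

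To prove \eqref{eqn:cor_uniform_heat_bound} I would use that $\varphi\in\Schw$ gives $|\varphi(z)|\le C_N(1+|z|)^{-N}$ for every $N$, and split the Gaussian expectation according to the event $E_x:=\{\sqrt{\alpha T}\,|\xi|\le |x|/2\}$. On $E_x$ one has, for all $t\le T$, the bound $|x+\sqrt{\alpha t}\,\xi|\ge |x|-\sqrt{\alpha T}\,|\xi|\ge |x|/2$, hence $|\varphi(x+\sqrt{\alpha t}\,\xi)|\le C_N(1+|x|/2)^{-N}\le 2^N C_N(1+|x|)^{-N}$. On the complement $E_x^c$ one simply estimates $|\varphi(x+\sqrt{\alpha t}\,\xi)|\le\|\varphi\|_\infty$ and uses the Gaussian tail bound $\P(E_x^c)=\P(|\xi|>|x|/(2\sqrt{\alpha T}))\le C\exp(-|x|^2/(16\alpha T))$, which is itself dominated by $(1+|x|)^{-N}$ for every $N$. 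Summing the two contributions gives \eqref{eqn:cor_uniform_heat_bound}, uniformly in $t\in[0,T]$; the endpoint $t=0$ is the trivial case $P_0|\varphi|=|\varphi|$.

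It then remains to integrate \eqref{eqn:cor_uniform_heat_bound} against $\mu$, and here I would invoke the fact that a positive tempered distribution has at most polynomial growth: there is $m\in\N$ such that $\int_{\R^d}(1+|x|)^{-m}\,\mu(\dd x)<\infty$. This is elementary: by continuity of $\mu$ on $\Schw$ one has $|\av{\mu,\psi}|\le C\max_{n,|\beta|\le k}\|\psi\|_{\beta,n}$ for some $k$; testing against non-negative $\C_c^\infty$ functions that equal $1$ on $B_R$, vanish outside $B_{2R}$, and have $\beta$-th derivatives of order $R^{-|\beta|}$ yields $\mu(B_R)\le C'R^k$ for $R\ge 1$, and summing over dyadic annuli gives the claim for any $m>k$ (see also \cite{baake2022note}). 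Choosing $N=m$ in \eqref{eqn:cor_uniform_heat_bound} we obtain
\begin{equation*}
	\sup_{t\in[0,T]}\av{\mu,|P_t\varphi|}\le\sup_{t\in[0,T]}\av{\mu,P_t|\varphi|}\le C_m\int_{\R^d}(1+|x|)^{-m}\,\mu(\dd x)<\infty,
\end{equation*}
which is the assertion.

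The main obstacle is the uniform-in-$t$ bound \eqref{eqn:cor_uniform_heat_bound} for $t$ near $0$: the heat kernel normalization $(2\pi\alpha t)^{-d/2}$ blows up as $t\downarrow 0$, so one cannot afford a crude estimate and must exploit the Gaussian off-diagonal decay to beat this blow-up — which is exactly what the splitting on $E_x$ does. The remaining ingredients, namely the rapid decay of Schwartz functions and the polynomial-growth bound for tempered measures, are routine.
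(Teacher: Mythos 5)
Your proof is correct, and it reaches the conclusion by the same two-step reduction as the paper (a $t$-uniform rapid-decay bound on $P_t\varphi$, integrated against $\mu$ using the fact that a tempered measure integrates some inverse polynomial weight), but the key ingredient is obtained differently. The paper writes $\av{\mu,|P_t\varphi|}\le\av{\mu,1/f}\,\|fP_t\varphi\|_{0,0}$ with $f(x)=1+|x|^{2n}$, cites \cite[Theorem~2.7]{baake2022note} for $\av{\mu,1/f}<\infty$, and then gets $\sup_{t\in[0,T]}\|fP_t\varphi\|_{0,0}<\infty$ from Lemma~\ref{lemma:Schw1}, i.e.\ from the continuity of $t\mapsto P_t\varphi$ in $\Schw$, which is proved via the Fourier transform. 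You instead prove the pointwise bound $\sup_{t\in[0,T]}P_t|\varphi|(x)\le C_N(1+|x|)^{-N}$ from scratch using the probabilistic representation and the splitting on $E_x=\{\sqrt{\alpha T}|\xi|\le|x|/2\}$, and you also re-derive the polynomial-growth property of positive tempered distributions rather than citing it; both of these arguments are sound (note that with the representation $P_tf(x)=\E f(x+\sqrt{\alpha t}\,\xi)$ there is in fact no small-$t$ normalization issue to beat, so the "main obstacle" you flag is milder than you suggest). What your route buys is a self-contained, real-space, elementary argument that avoids Fourier analysis and the external reference; what the paper's route buys is brevity and the reuse of Lemma~\ref{lemma:Schw1}, which is needed elsewhere anyway (e.g.\ in Lemma~\ref{lem:existence_of_solution}) and yields the stronger statement that $P_t|\varphi|\in\Schw$ with all seminorms locally uniformly bounded in $t$, not just the zeroth-order decay.
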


\begin{proof}
By \cite[Theorem~2.7]{baake2022note}, there exist $n\in\N$ such that 
$
\av{\mu,1/f}<\infty
$
for $f(x)=1+|x|^{2n}$. Thus,
\[
\av{\mu,|P_t\varphi|}\le \av{\mu,1/f}\|fP_t\varphi\|_{0,0}\le \av{\mu,1/f}\left(\|P_t\varphi\|_{0,0}+\|P_t\varphi\|_{0,2n}\right).
\]
Using Lemma~\ref{lemma:Schw1}, we get the needed uniform bound in $t\in[0,T]$.
\end{proof}

We will now prove a similar statement to Lemma~\ref{lemma:Schw1} for $(V_t\varphi)_{t\ge 0}$.

\begin{lemma}
	Let $\varphi \in \mathcal{S}$. Then $(V_t\varphi)_{t\ge 0}$ and $(\partial_t V_t\varphi)_{t\ge 0}$ belong to $\C([0,\infty),\Schw)$. In particular, $(\Delta V_t\varphi)_{t\ge 0}$ and $(\Gamma (V_t\varphi))_{t\ge 0}$ belong to $\C([0,\infty),\Schw)$. 
	\label{lemma:technical3}
\end{lemma}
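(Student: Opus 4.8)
The plan is to reduce everything to the Cole--Hopf formula $V_t\varphi = -\alpha\ln(P_t e^{-\varphi/\alpha})$ and transfer the regularity of the heat semigroup (Lemma~\ref{lemma:Schw1}) through the logarithm. First I would fix $\varphi\in\Schw$ and set $u(t,x) := P_t e^{-\varphi(x)/\alpha}$, so that $V_t\varphi = -\alpha\ln u(t,\cdot)$. Since $\varphi$ is bounded, $e^{-\varphi/\alpha}$ is bounded above and bounded away from $0$, and by the maximum principle $u(t,x)$ lies in a compact subinterval $[c,C]\subset(0,\infty)$ uniformly in $t\ge 0$ and $x\in\R^d$; this uniform lower bound $u\ge c>0$ is what makes $\ln u$ and all its derivatives well behaved.

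The next step is to show $u(t,\cdot)-$\,(constant)\,$\in\Schw$ with the map $t\mapsto u(t,\cdot)$ continuous into $\Schw$; more precisely, writing $e^{-\varphi/\alpha} = 1 + \eta$ with $\eta := e^{-\varphi/\alpha}-1\in\Schw$ (here one uses that $e^{-\varphi/\alpha}-1$ is rapidly decreasing because $\varphi$ is, together with the elementary bound $|e^{-s}-1|\le |s|e^{|s|}$ applied to Schwartz seminorms), we have $u(t,\cdot) = 1 + P_t\eta$ with $P_t\eta\in\Schw$ and $t\mapsto P_t\eta\in\C([0,\infty),\Schw)$ by Lemma~\ref{lemma:Schw1}. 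Then $V_t\varphi = -\alpha\ln(1+P_t\eta)$, and I would invoke the fact that $\Schw$ is stable under composition with smooth functions vanishing at $0$: since $r\mapsto -\alpha\ln(1+r)$ is smooth on a neighborhood of the compact range of $P_t\eta$ and vanishes at $r=0$, the Fa\`a di Bruno / chain-rule estimates on the seminorms $\|\cdot\|_{\beta,n}$ give $V_t\varphi\in\Schw$, continuously in $t$. The uniform lower bound $u\ge c$ ensures the composition is with a function smooth on a fixed neighborhood, so these estimates are uniform on compact time intervals, yielding $t\mapsto V_t\varphi\in\C([0,\infty),\Schw)$.

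For the time derivative, I would differentiate $V_t\varphi = -\alpha\ln u(t,\cdot)$ to get $\partial_t V_t\varphi = -\alpha\,\partial_t u / u$. Here $\partial_t u = \partial_t P_t\eta \in\C([0,\infty),\Schw)$ again by Lemma~\ref{lemma:Schw1}, and $1/u = 1/(1+P_t\eta)$ is a Schwartz perturbation of the constant $1$ by the same composition argument (the function $r\mapsto 1/(1+r)-1$ is smooth near the compact range and vanishes at $0$); a product of a Schwartz function with a bounded-Schwartz-perturbation-of-a-constant is Schwartz, and continuity in $t$ is inherited. Alternatively, and perhaps more cleanly, one differentiates the Hamilton--Jacobi equation~\eqref{eqn:hamilto_jacobi_equation} itself: $\partial_t V_t\varphi = \tfrac{\alpha}{2}\Delta V_t\varphi - \tfrac12\Gamma(V_t\varphi)$, so once $V_t\varphi\in\C([0,\infty),\Schw)$ is established, the right-hand side is manifestly in $\C([0,\infty),\Schw)$, giving both the claim for $\partial_t V_t\varphi$ and the ``in particular'' statement for $\Delta V_t\varphi$ and $\Gamma(V_t\varphi) = |\nabla V_t\varphi|^2$ at once (the latter since $\Schw$ is an algebra and $\nabla V_t\varphi\in\C([0,\infty),\Schw)$ componentwise).

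The main obstacle I expect is not any single estimate but the bookkeeping of the composition-with-a-smooth-function step: verifying that $\psi\in\Schw$ with range in a fixed compact $K$ and $G\in\C^\infty$ on a neighborhood of $K$ with $G(0)=0$ implies $G\circ\psi\in\Schw$ with seminorm bounds depending only on finitely many seminorms of $\psi$ and on $\sup_K|G^{(j)}|$. This requires the Fa\`a di Bruno formula together with the observation that every term in the derivative $D^\beta(G\circ\psi)$ contains at least one factor $D^{\gamma}\psi$ with $\gamma\ne 0$ (so it decays), while the remaining factors of $\psi$ itself are merely bounded --- and that the polynomial weight $|x|^n$ can be distributed onto that one decaying factor. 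Once this lemma is in hand (it is standard, and could even be cited), everything else is a direct application of Lemma~\ref{lemma:Schw1} and the uniform positivity of $P_te^{-\varphi/\alpha}$.
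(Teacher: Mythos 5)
Your proposal is correct and follows essentially the same route as the paper: the paper also factors $V_t\varphi=(\cH\circ P_t\circ\cL)(\varphi)$ with $\cL(\varphi)=e^{-\varphi/\alpha}-1$ and $\cH(\psi)=-\alpha\ln(1+\psi)$ (your decomposition $u=1+P_t\eta$), transfers continuity from Lemma~\ref{lemma:Schw1}, and obtains $\partial_tV_t\varphi$ from the Hamilton--Jacobi equation. The only difference is one of detail: where the paper asserts that ``a direct computation shows'' $\cL$ and $\cH$ are continuous on the relevant subsets of $\Schw$, you spell out the Fa\`a di Bruno composition estimate and the maximum-principle lower bound that make this work.
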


\begin{proof}
Let 
\[
\Schw_{>-1}=\left\{\varphi\in\Schw:\ \varphi(x)>-1\ \ \mbox{for all}\ \ x\in\R^d\right\}
\]
be a subspace of $\Schw$ with the induced topology. Define the maps 
\begin{equation}\label{eqn:operators_L_and_H}
\begin{split}
    \cL(\varphi)&:=e^{\frac{1}{\alpha}\varphi}-1,\quad \varphi\in\Schw,\\
    \cH(\varphi)&:=-\alpha\ln(1+\varphi),\quad \varphi\in\Schw_{>-1}.
\end{split}
\end{equation}
Then a direct computations show that $\cL:\Schw\to \Schw_{>-1}$ and $\cH:\Schw_{>-1}\to\Schw$ are continuous. Therefore, the continuity of $(V_t\varphi)_{t\ge 0}$ directly follows from Lemma~\ref{lemma:Schw1} and the equality fact that 
\[
V_t\varphi=-\alpha\ln\left(1+P_t\left(e^{-\frac{1}{\alpha}\varphi}-1\right)\right)=(\cH\circ P_t\circ\cL)(\varphi)
\]
for all $t\ge 0$. Since $\Delta:\Schw\to\Schw$ and $\Gamma:\Schw\to\Schw$ are continuous in $\Schw$, it implies the second part of the lemma. The continuity of $(\partial V_t\varphi)_{t\ge 0}$ follows from the equality
\begin{equation}\label{eqn:equality_for_partial_t_V}
\partial_tV_t\varphi=\frac{\alpha}{2}\Delta V_t\varphi+\Gamma (V_t\varphi)
\end{equation}
for all $t\ge 0$. This completes the proof of the lemma.
\end{proof}

\begin{lemma}
Let  $\kappa\in\Schw$ be a strictly positive functions such that $\kappa(x)=e^{-|x|}$ for all $x\in\R^d$ with $|x|>1$. Then for each $\varphi \in \C_c^\infty(\R^d)$ and $T>0$ there exists a constant $C>0$ such that 
	\begin{align*}        |V_t\varphi|+|\partial_t V_t\varphi|+|\Delta V_t\varphi|+|\Gamma(V_t\varphi)|\le C\kappa
	\end{align*}
 for all $t\in[0,T]$.
	\label{lemma:technical1}
\end{lemma}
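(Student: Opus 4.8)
The plan is to reduce everything to explicit Gaussian estimates on the heat semigroup $P_t$ and then exploit the Cole–Hopf structure $V_t\varphi=-\alpha\ln(P_t e^{-\varphi/\alpha})$. Fix $\varphi\in\C_c^\infty(\R^d)$ with $\supp\varphi\subseteq B_R$ for some $R>0$, and write $\psi:=e^{-\varphi/\alpha}-1=\cL(\varphi)$, which is also a smooth compactly supported function (supported in $B_R$), bounded by some constant. The starting point is the identity $V_t\varphi=-\alpha\ln(1+P_t\psi)=\cH(P_t\psi)$. Since $\|\psi\|_\infty<1$ may fail, I first note $P_t\psi$ is a bounded function with $\|P_t\psi\|_\infty\le\|\psi\|_\infty$ and, crucially, $1+P_t\psi=P_t e^{-\varphi/\alpha}\ge P_t\big(e^{-\|\varphi\|_\infty/\alpha}\big)=e^{-\|\varphi\|_\infty/\alpha}>0$, so $\cH$ is applied safely and $\log(1+P_t\psi)$ stays bounded uniformly in $t$. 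The key quantitative input is a pointwise bound of the form $|P_t\psi(x)|\le C_T\,p_t(x)$ where $p_t$ is (up to constants) the heat kernel mass of $B_R$ seen from $x$; concretely, since $\psi$ is supported in $B_R$,
\[
|P_t\psi(x)|\le \|\psi\|_\infty\int_{B_R}\frac{1}{(2\pi\alpha t)^{d/2}}e^{-\frac{|x-y|^2}{2\alpha t}}\,\dd y .
\]
For $|x|>2R$ one has $|x-y|\ge|x|/2$ on $B_R$, giving $|P_t\psi(x)|\le C\,t^{-d/2}e^{-|x|^2/(8\alpha t)}$, and one checks that $\sup_{t\in[0,T]}t^{-d/2}e^{-|x|^2/(8\alpha t)}\le C_T e^{-c|x|}\le C_T'\kappa(x)$ for $|x|>1$ (the Gaussian beats any exponential; the supremum in $t$ is attained at $t\sim|x|^2$). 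For $|x|\le 2R$ the left side is simply bounded, and $\kappa$ is bounded below there, so the bound $|P_t\psi|\le C\kappa$ on $[0,T]$ follows. This handles $|V_t\varphi|$: on $\{P_t\psi\ge 0\}$, $|{-\alpha\ln(1+P_t\psi)}|\le \alpha P_t\psi$; on $\{P_t\psi<0\}$, using $1+P_t\psi\ge e^{-\|\varphi\|_\infty/\alpha}$ one gets $|{-\alpha\ln(1+P_t\psi)}|\le \alpha e^{\|\varphi\|_\infty/\alpha}|P_t\psi|$; either way $|V_t\varphi|\le C\kappa$ on $[0,T]$.

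Next I treat $\Delta V_t\varphi$ and $\Gamma(V_t\varphi)=|\nabla V_t\varphi|^2$. Differentiating $V_t\varphi=-\alpha\ln(1+P_t\psi)$ gives
\[
\nabla V_t\varphi=-\alpha\,\frac{\nabla P_t\psi}{1+P_t\psi},\qquad
\Delta V_t\varphi=-\alpha\,\frac{\Delta P_t\psi}{1+P_t\psi}+\alpha\,\frac{|\nabla P_t\psi|^2}{(1+P_t\psi)^2}.
\]
Since $1+P_t\psi\ge e^{-\|\varphi\|_\infty/\alpha}$ is bounded below uniformly in $x$ and $t\in[0,T]$, it suffices to bound $|\nabla P_t\psi|$ and $|\Delta P_t\psi|$ by $C\kappa$ on $[0,T]$ (for the quadratic terms, note $\kappa^2\le\kappa$ since $0<\kappa\le1$ can be arranged, or simply absorb: $|\nabla P_t\psi|^2\le C\kappa^2\le C\kappa$). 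These derivative bounds come from the same Gaussian estimate: $\nabla P_t\psi=P_t\nabla\psi$ and $\Delta P_t\psi=P_t\Delta\psi$ (valid since $\psi\in\C_c^\infty$), and $\nabla\psi,\Delta\psi$ are again in $\C_c^\infty(\R^d)$ supported in $B_R$, so the exact argument of the previous paragraph applies verbatim to give $|P_t\nabla\psi|+|P_t\Delta\psi|\le C\kappa$ on $[0,T]$. Combining, $|\Delta V_t\varphi|+|\Gamma(V_t\varphi)|\le C\kappa$ on $[0,T]$. Finally, $|\partial_t V_t\varphi|$ is handled for free by the PDE identity~\eqref{eqn:equality_for_partial_t_V}, $\partial_t V_t\varphi=\frac{\alpha}{2}\Delta V_t\varphi+\Gamma(V_t\varphi)$, so the bound on $\partial_t V_t\varphi$ follows from the bounds on $\Delta V_t\varphi$ and $\Gamma(V_t\varphi)$ already established.

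I expect the main obstacle to be purely technical bookkeeping rather than conceptual: making the estimate $\sup_{t\in[0,T]}t^{-k/2}e^{-|x|^2/(ct)}\le C_{T,k}\,e^{-c'|x|}$ precise and uniform (one optimizes over $t$, finding the maximum at $t=|x|^2/(ck)$ when $|x|^2/(ck)\le T$, and uses monotonicity on $[|x|^2/(ck),\infty)\cap[0,T]$ otherwise), and carefully tracking the lower bound $1+P_t\psi\ge e^{-\|\varphi\|_\infty/\alpha}$ through the quotient rule so that no division by something approaching zero occurs. A minor point is that one wants $\kappa$ bounded away from $0$ on any fixed ball, which holds since $\kappa$ is strictly positive and continuous; and one may freely shrink $\kappa$ by a constant so that $\kappa\le1$ globally, which makes $\kappa^2\le\kappa$ and lets the quadratic gradient terms be absorbed. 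Aside from these, every step is a direct Gaussian computation combined with the continuity statements already recorded in Lemma~\ref{lemma:Schw1} and the Cole–Hopf factorization from Lemma~\ref{lemma:technical3}.
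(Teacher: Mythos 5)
Your proof is correct and follows essentially the same route as the paper's: Cole--Hopf factorization through $\cL$ and $\cH$, a uniform-in-$t$ bound $|P_t\eta|\le C\kappa$ for compactly supported $\eta$ applied to $\cL(\varphi)$ and its derivatives, the quotient rule with the denominator $1+P_t\cL(\varphi)$ bounded below, absorption of $\kappa^2$ into $C\kappa$, and the Hamilton--Jacobi identity for $\partial_t V_t\varphi$. The only (immaterial) difference is how the Gaussian bound is obtained: you estimate the convolution kernel directly and optimize over $t\in[0,T]$, whereas the paper dominates $|\eta|\le \tilde C p_1$ and uses $P_tp_1=p_{t+1}\le C\kappa$.
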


\begin{proof}
    Let $T>0$ be fixed.
    Denote by $p_t$ the distribution of a Brownian motion on $\R^d$ with diffusion rate $\alpha$ started from zero. Then for each $\varphi\in\C_c^\infty(\R^d)$ there exists a constant $\tilde C>0$ such that $|\varphi|\le p_1$. Thus,
    \[
    |P_t\varphi|\le \tilde C P_tp_1=\tilde C p_{t+1}\le C \kappa 
    \]
    for all $t\in[0,T]$ and some constant $C_{\varphi,T}>0$ depending on $\varphi$.
    Therefore, using integration by parts, we get for each $k\in \N$, $j\in[d]$ and $t\in[0,T]$
    \[    
    |\partial_j^kP_t\varphi|=|P_t(\partial_j^k\varphi)|\le C_{\partial_j^k\varphi,T}\kappa.
    \]

    Let the operators $\cL$ and $\cH$ be defined by \eqref{eqn:operators_L_and_H}. Since $\cL(\varphi)\in\Schw_{-1}$, there exists $\delta>0$ such that $\inf_{x\in\R^d}\cL(\varphi)(x)\ge -1+\delta$. Then choosing a constant $\tilde C$ such that 
    \[
    |\alpha\ln(1+r)|\le \tilde C |r|,\quad r\in[-1+\delta,\infty),
    \]
    observing that $\cL(\varphi)\in\C^\infty_c(\R^d)$ and using the maximum principle  for $P_t$, we obtain
    \[
    |V_t\varphi|=\left|(\cH\circ P_t\circ \cL)(\varphi)\right|\le \tilde C\left|P_t\cL(\varphi)\right|\le\tilde C C_{\cL(\varphi),T}\kappa
    \]
    for all $t\ge 0$. Similarly,
    \begin{align*}
        |\partial_j V_t\varphi|=\alpha\frac{\left|\partial_j P_t\cL(\varphi)\right|}{1+P_t\cL(\varphi)}\le \frac{\alpha}{\delta}C_{\partial_j\cL(\varphi),T}\kappa 
    \end{align*}
    and
    \begin{align*}        \left|\partial_j^2V_t\varphi\right|\le\alpha\frac{\left|\partial_j^2 P_t\cL(\varphi)\right|}{1+P_t\cL(\varphi)}+\alpha\frac{\left(\partial_j P_t\cL(\varphi)\right)^2}{(1+P_t\cL(\varphi))^2}\le\alpha\left[\frac{C_{\partial_j^2\cL(\varphi)}}{\delta}+\frac{C_{\partial_j\cL(\varphi),T}^2C'}{\delta^2}\right]\kappa
    \end{align*}
    for all $t\in[0,T]$ and $j\in[d]$, where we have used the inequality $\kappa^2 \le C'\kappa$ for a constant $C'>0$ in the last step. This implies that there exists a constant $C>0$ such that 
    \[
    |V_t\varphi|+|\Gamma( V_t\varphi)|+|\Delta V_t\varphi|\le C\kappa
    \]
    for all $t\in[0,T]$. Taking into account that $(V_t\varphi)_{t\ge 0}$ solves the Hamilton-Jacobi equation~\eqref{eqn:hamilto_jacobi_equation}, we get the bound for $\partial_t V_t\varphi$. This completes the proof of the lemma.
\end{proof}

\subsection{Portmanteau theorem for tempered measures}
\label{sec:helpful}
We need following version of the Portmanteau theorem, adapted to tempered measures. Note that the statement is not true for unbounded sets.

\begin{lemma}
	Let $\{\mu_n\}_{n\ge 1}$ be a sequence converging to $\mu$ in $\cM_\Schw$ and $A\in\cB(\R^d)$ be a  bounded set satisfying $\mu(\partial A) = 0$.
	Then, $\lim_{n \to \infty} \mu_n(A) = \mu(A)$.
	\label{lemma:Portmanteau}
\end{lemma}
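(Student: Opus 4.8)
The statement is the classical Portmanteau theorem for weak convergence of measures, but here the test functions are restricted to $\Schw$ (or $\C_c(\R^d)$, which suffices since all $\mu_n$ and $\mu$ are locally finite). The plan is to localize to a fixed compact set, pass to a genuinely weak convergence statement of finite measures, and then invoke the standard Portmanteau theorem.

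First I would fix a bounded open set $U \supset \overline{A}$, e.g. a large open ball, and choose a cutoff $\chi \in \C_c^\infty(\R^d)$ with $\chi = 1$ on $\overline{A}$, $0 \le \chi \le 1$, and $\supp \chi \subset U$. Define the finite measures $\nu_n := \chi \cdot \mu_n$ and $\nu := \chi \cdot \mu$ on $\R^d$ (equivalently, on $\overline{U}$). For any $f \in \C_b(\R^d)$ the product $f\chi$ lies in $\C_c(\R^d) \subset \Schw$, so $\av{\nu_n, f} = \av{\mu_n, f\chi} \to \av{\mu, f\chi} = \av{\nu, f}$ by the assumed convergence in $\cM_\Schw$. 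Taking $f \equiv 1$ shows $\nu_n(\R^d) \to \nu(\R^d) < \infty$, and hence $\{\nu_n\}$ is a bounded sequence of finite measures converging weakly (in the sense of testing against $\C_b$) to the finite measure $\nu$. Since $A \subset \{\chi = 1\}$, we have $\nu_n(A) = \mu_n(A)$ and $\nu(A) = \mu(A)$, and $\partial A$ computed in $\R^d$ coincides with its boundary relative to $U$, so $\nu(\partial A) = \mu(\partial A) = 0$.

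Now I would apply the classical Portmanteau theorem to the weakly convergent sequence of finite measures $\{\nu_n\}$: since $\nu(\partial A) = 0$, it follows that $\nu_n(A) \to \nu(A)$, which is exactly $\mu_n(A) \to \mu(A)$. If one prefers a self-contained argument rather than citing Portmanteau, the usual proof goes through verbatim in this finite-measure setting: for the "liminf on open sets" half, approximate $\I_{A^\circ}$ from below by $g_k \uparrow \I_{A^\circ}$ with $g_k \in \C_c(\R^d)$, $0 \le g_k \le 1$, to get $\liminf_n \nu_n(A^\circ) \ge \sup_k \av{\nu, g_k} = \nu(A^\circ)$; applying this to $A^\circ$ and to $U \setminus \overline{A}$ and using $\nu_n(\overline U) \to \nu(\overline U)$ yields $\limsup_n \nu_n(\overline A) \le \nu(\overline A)$; combining the two with $\nu(\partial A) = 0$ gives the result.

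\textbf{Main obstacle.} There is no deep difficulty here; the only point requiring a little care is the reduction from the restricted class of test functions ($\Schw$, or $\C_c$) to the class $\C_b$ needed for Portmanteau — this is precisely where the boundedness hypothesis on $A$ enters, via the cutoff $\chi$, and is also why (as the remark before the lemma notes) the statement fails for unbounded $A$: without compact support one cannot control the mass of $\mu_n$ near infinity, and the tails can carry mass that is invisible to Schwartz test functions in the limit.
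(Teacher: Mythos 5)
Your overall strategy — localize with a smooth compactly supported cutoff equal to $1$ on $A$, pass to a weakly convergent sequence of finite measures supported in a fixed compact set, and invoke the classical Portmanteau theorem — is essentially the paper's proof; the paper merely normalizes the localized measures $\Phi\cdot\mu_n/\av{\mu_n,\Phi}$ to probability measures (treating the degenerate case $\av{\mu,\Phi}=0$ separately) before citing Portmanteau, whereas you keep them as finite measures. That difference is cosmetic.

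There is, however, one incorrect step in your reduction: the inclusion $\C_c(\R^d)\subset\Schw$ is false, since Schwartz functions must be smooth. Consequently, for a general $f\in\C_b(\R^d)$ the product $f\chi$ is compactly supported and continuous but not in $\Schw$, and convergence in $\cM_\Schw$ does not directly give $\av{\mu_n,f\chi}\to\av{\mu,f\chi}$. The gap is easy to close in either of two ways: (i) restrict to $f\in\C^\infty_b(\R^d)$, so that $f\chi\in\C_c^\infty(\R^d)\subset\Schw$, and use that $\C_b^\infty$ is convergence-determining for probability (hence, after normalization, for finite) measures on $\R^d$ — this is exactly what the paper does; or (ii) keep $f\in\C_b$ but insert a mollification step: approximate $f\chi$ uniformly by functions in $\C_c^\infty$ supported in a fixed compact $K$, and control the error using $\sup_n\mu_n(K)\le\sup_n\av{\mu_n,\Psi}<\infty$ for a smooth $\Psi\ge\I_K$. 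The same remark applies to your self-contained variant: the approximating functions $g_k\uparrow\I_{A^\circ}$ should be chosen in $\C_c^\infty$ (smooth Urysohn functions), which is always possible. With either repair the argument is complete and coincides in substance with the paper's.
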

\begin{proof}
    Let $\Phi\in\C^\infty_c(\R^d)$ be a fixed non-negative function such that $\Phi=1$ on $A$. If $\av{\mu,\Phi}=0$, then $\mu(A)=0$ and
    \[
    0\le\limsup_{n\to \infty}\mu_n(A)\le \lim_{n\to \infty}\av{\mu_n,\Phi}=0
    \]
    that gives the statement of the lemma. Now, let $\av{\mu,\Phi}>0$. Without loss of generality, we assume that $\av{\mu_n,\Phi}>0$ for all $n\ge 1$. Define the probability measures
    \[    \tilde\mu_n(B)=\frac{1}{\av{\mu_n,\Phi}}\int_B\Phi(x)\mu_n(\dd x),\quad B\in\cB(\R^d),
    \]
    for all $n\ge 1$ and
    \[    
    \tilde\mu(B)=\frac{1}{\av{\mu,\Phi}}\int_B\Phi(x)\mu(\dd x),\quad B\in\cB(\R^d),
    \]
    that are supported on $\supp\Phi$. Therefore, $\av{\tilde\mu_n,\varphi}\to\av{\tilde\mu,\varphi}$ as $n\to\infty$ for each $\varphi\in\C^\infty_b(\R^d)$. This implies that $\tilde\mu_n\to\tilde\mu$ in distribution. By the absolute continuity of $\tilde\mu_n$ with respect to $\mu_n$ we have that $\tilde\mu_n(\partial A)=0$ for all $n\ge 1$. Thus, by \cite[Theorem~3.3.1]{ethierkurtz},
    \[    \lim_{n\to\infty}\mu_n(A)=\lim_{n\to\infty)}\av{\mu_n,\Phi}\tilde\mu_n(A)=\av{\mu,\Phi}\tilde\mu(A)=\mu(A).
    \]
    This completes the proof of the lemma.
\end{proof}

\section*{Acknowledgements}
The first author was partially supported by the Deutsche Forschungsgemeinschaft (DFG, German Research Foundation) - SFB 1283/2 2021 - 317210226. 

The first author thanks the Max Planck Institute for Mathematics in the Sciences and the University of Bielefeld for their warm hospitality, where a part of this research was carried out. The second author thanks Daniel Heydecker for his discussions of the Example~\ref{exa:blow_up} (see also the Coffee Break Problem on his web page\footnote{\url{https://danielheydecker.wordpress.com/2022/09/19/coffee-break-problem-19-09-2022/}}). The both authors are grateful to Max von Renesse for many fruitful discussions.  

\bibliography{lib_mct}

\end{document}